\newtheorem{Theorem}{Theorem}
\newtheorem{Lemma}[Theorem]{Lemma}
\newtheorem{Proposition}[Theorem]{Proposition}
\newtheorem{Example}[Theorem]{Example}
\theoremstyle{definition}
\newtheorem{definition}[Theorem]{Definition}
\def\P{{\mathbb P}}
\def\E{{\mathbb E}}
\def\R{{\mathbb R}}
\def\I{{\mathbb I}}
\begin{document}
\title{Maxima over random time intervals\\
for heavy-tailed compound renewal and L\'evy processes}
\author{Sergey Foss}
\address{Actuarial Mathematics and Statistics,
Heriot-Watt University, Edinburgh EH14 4AS, Scotland, UK}
\address{Sobolev Institute of Mathematics, Novosibirsk, Russia}
\email{S.Foss@hw.ac.uk}

\author{Dmitry Korshunov}
\address{Department of Mathematics and Statistics, Lancaster University, UK}
\email{d.korshunov@lancaster.ac.uk}

\author{Zbigniew Palmowski}
\address{Faculty of Pure and Applied Mathematics,
Wroc\l aw University of Science and Technology,
Wyb. Wyspia\'nskiego 27, 50-370 Wroc\l aw, Poland}
\email{zbigniew.palmowski@gmail.com}\maketitle

\date{\today}

\begin{abstract}
We derive subexponential tail asymptotics for the distribution of the maximum
of a compound renewal process with linear component
and of a L\'evy process, both with negative drift,
over {\it random} time horizon $\tau$
that does not depend on the future increments of the process.
Our asymptotic results are {\it uniform} over the whole class of such random times.
Particular examples are given by stopping times and by $\tau$ independent of the processes.
We link our results with random walk theory.

\noindent {\sc Keywords.} uniform asymptotics $\star$ stopping time $\star$
renewal process $\star$ subexponential distribution $\star$ L\'evy process $\star$ random walk.
\end{abstract}


\pagestyle{myheadings} \markboth{\sc  S.\ Foss --- D.\ Korshunov --- Z.\ Palmowski}
{\sc Maxima sampling on random time intervals}

\section{Introduction and main results}\label{intro}

In this paper we derive subexponential tail asymptotics
for the distribution of the maximum of compound renewal processes
with linear component and for L\'evy processes, both with negative drift,
over random time horizon $\tau$ (which may be infinite with positive probability)
that does not depend on the future increments of $X$. We focus on obtaining
results that are {\it uniform in a broad class of random times} that is introduced below.

We believe that this is the first work in the context of the subexponential asymptotics
in the continuous-time setting where
the concepts of random times that do not depend on the future
increments of the process are introduced and systematically studied;
see Definitions \ref{indinc} and \ref{defindfuture} and Examples \ref{example1}
and \ref{example2} below.
To handle this, we propose a new approach based on the creation of some special i.i.d. cycles.
This is a crucial step
that allows us to obtain the derived results uniformly over the aforementioned random times.
We like to underline as well that our approach  produces the respective asymptotics
for random walks and for general L\'evy processes.
As such, it seems to be new for the latter class of processes, where
the Wiener-Hopf factorisation and ladder process arguments have been used before.

Let us firstly recall some basic definitions related to subexponential distributions.
A probability distribution $F$ on the real line has a {\it heavy (right) tail} if
\begin{align*}
\int_{-\infty}^{\infty} e^{\varepsilon x} F(dx) = \infty\quad\mbox{for all } \varepsilon >0,
\end{align*}
and a {\it light tail}, otherwise.
In this paper we focus on a very important sub-class of heavy-tailed distributions,
namely on the class of {\it strong subexponential} distributions ${\mathcal S}^*$.
A distribution $F$ on $\R$ with a finite mean and unbounded support
on the right belongs to the class ${\mathcal S}^*$ if
\begin{eqnarray*}
\int_0^x \overline F(x-y)\overline F(y)dy
\sim 2a^+\overline F(x)\quad\mbox{ as }x\to\infty,
\end{eqnarray*}
where $\overline F(x)=F(x,\infty)$ and $a^+=\int_0^\infty\overline F(y)dy$.
By  \cite[Theorem 3.27]{FKZ}, any $F\in {\mathcal S}^*$ is {\it subexponential},
i.e. satisfies the following two properties:
$\overline{F*F}(x)\sim 2\overline{F}(x)$ and $F$ is {\it long-tailed},
i.e. $\overline{F}(x+1) \sim \overline{F}(x)$.
Here `$*$' denotes the convolution operator and,
for any two eventually positive functions $f(x)$ and $g(x)$,
we write $f(x)\sim g(x)$ if $f(x)/g(x)\to 1$ as $x\to\infty$.
We refer to \cite{FKZ} for an overview of the properties of strong subexponential
and related distributions.

To define properly the family of (possibly improper) random times of interest,
we use the notation $\sigma(\Xi)$ for the sigma-algebra generated
by a family of random variables $\Xi$.

\begin{definition}\label{indinc}
We say that a random time $\tau\in[0,\infty]$ {\it does not depend on the future
increments of the process} $X=\{X_t, t\ge 0\}$ if the sigma-algebras
$\sigma\{X_s,\ s\le t,\ \I\{\tau\le t\}\}$
and $\sigma\{X_{t+v}-X_t,\ v>0\}$ are independent, for all $t>0$.
\end{definition}

Equivalently, the independence of the future property may be
defined as independence of the sigma-algebras
$\sigma\{X_s,\ s<t,\ \I\{\tau<t\}\}$ and $\sigma\{X_{t+v}-X_{t-0},\ v\ge 0\}$, for all $t>0$.
A similar term appeared in \cite{KP,Dima} while proving generalisations of
Wald's identity and in \cite[Section 7]{Dima2} where a random walk was considered.

For a random process $X$ {\it with independent increments},
there are two important examples of
random times that are independent of the future increments of $X$:

(i) any hitting time $\tau$ (i.e.\ the first time to hit a certain set), and,
more generally, any stopping time $\tau$ where the event $\{\tau \le t\}$
belongs to the sigma-algebra $\sigma\{X_s,\ s\le t\}$ for all $t$, and

(ii) any random time $\tau$ that does not depend on the process $X$.\\
Notice that, for a non-deterministic process $X$, a stopping time $\tau$ does not depend on
$X$ if and only if $\tau$ is a constant time.
More examples and comments related to Definition \ref{indinc}
are given in Section \ref{sec:examples}.

While Definition \ref{indinc} works well for L\'evy processes,
it is not so for compound renewal processes, in general. For example, if we follow Definition \ref{indinc}, then a constant $\tau=c$,
while being independent of any process, appears to depend on the future
increments of a compound renewal process if the process in not compound Poisson.
Indeed, the time after $c$ to the next jump (i.e. the overshot of the underlying
renewal process at time $c$) does depend on
$\sigma\{X_s,\ s\le c,\ \I\{\tau\le c\}\}=\sigma\{X_s,\ s\le c\}$.
Having this observation in mind,
we introduce another notion of `independence of the future',
that is with respect to a sequence of (random) embedded epochs.

\begin{definition}\label{defindfuture}
Given an increasing sequence of random times  $T_n$,
we say that $\tau\in[0,\infty]$ is {\it independent of the future increments of $X$
with respect to} $\{T_n\}$
if the following condition holds: for all $n\ge 1$, the two $\sigma$-algebras
\begin{eqnarray}\label{tau.n.crp}
\qquad
\sigma\{X_s,\ s<T_n,\ \I\{\tau<T_n\}\} \
\text{and} \
\sigma\{X_{T_n+v}-X_{T_n-0},\ v\ge 0\} \
\text{are independent.}
\end{eqnarray}
\end{definition}

Below are two important examples of  stochastic processes we are mainly interested
in.

\begin{Example}\label{example1}\rm
Let $X$ be a
{\it compound renewal process with linear component},
\begin{eqnarray}\label{compound}
X_t &=& \sum_{i=1}^{N_t}Y_i+ct, \quad t\ge 0,
\end{eqnarray}
where $c$ is a real constant, $\{Y_n\}_{n\ge1}$ i.i.d.\
jump sizes  with distribution $F$ and finite mean $b$, and
$N=\{N_t, t\ge 0\}$ a renewal process independent of the jump sizes,
with jump epochs
$0=T_0<T_1<T_2<\ldots$, where $T_n-T_{n-1}>0$
are i.i.d.\ positive random variables with finite mean $1/\lambda$.\\
If a random time
$\tau$ does not depend on the future increments of $X$,
then $\tau$ satisfies the condition \eqref{tau.n.crp} with $T_n$ the jump epochs.
Indeed, for any events $A\in\sigma\{X_s,\ s<T_n,\ \tau<T_n\}$ and
$B\in\sigma\{X_s-X_{T_n-0},\ s\ge T_n\}$ conditioning on $T_n$ implies a.s.\ equality
\begin{eqnarray*}
\P\{AB\mid T_n\} &=& \P\{B\mid A,T_n\}\P\{A\mid T_n\}\ =\
\P\{B\}\P\{A\mid T_n\},
\end{eqnarray*}
where $\P\{B\mid T_n\}$ does not depend on the value of $T_n$
and $A$ due to the renewal structure of $X_t$ and hence
\begin{eqnarray*}
\P\{AB\} &=& \E\P\{AB\mid T_n\}
\ =\ \P\{B\} \E \P\{A\mid T_n\}
\ =\ \P\{B\}\P\{A\} ,
\end{eqnarray*}
so the events $A$ and $B$ are indeed independent.
\end{Example}

\begin{Example}\label{example2}\rm
Let $X$ be a L\'evy process starting at the origin
and let $\tau$ not depend on the future increments of $X$.
Fix an $\varepsilon>0$ and consider the sequence of all jump epochs $T_n$
with jump sizes $|X_{T_n}-X_{T_n-0}|\ge\varepsilon$, so $T_{n+1}-T_n$, $n\ge 0$,
are i.i.d.\ exponentially distributed random variables, hereinafter $T_0=0$.
Then $\tau$ satisfies the condition \eqref{tau.n.crp}.
Indeed, since $\tau$ does not depend on the future increments of $X$,
it does not depend on the future jumps of $X$ of size at least $\varepsilon$,
which in turn yields \eqref{tau.n.crp} by arguments similar to those used above.
\end{Example}

In this paper we firstly focus on the compound renewal process $X$
with linear component introduced in \eqref{compound}.
We assume throughout that the drift of the process is negative, that is,
\begin{eqnarray}\label{cpp.drift}
\E(cT_1+Y_1) =: -a &<& 0
\end{eqnarray}
(equivalently, $c+b\lambda=-a\lambda<0$),
which implies that the family of distributions of the partial maxima
\begin{equation}\label{Mt}
M_t := \max_{u\in[0,t]}X_u
\end{equation}
is tight and
$$
\sup_{t>0}\,\P\{M_t>x\} \le \P\{M_\infty>x\} \to 0 \quad\mbox{as }x\to\infty.
$$
We are interested in the tail behaviour of $M_\tau$ for random times $\tau$.
Our first main result here is as follows.

\begin{Theorem}\label{th:stopping.1.crp}
Let the drift condition \eqref{cpp.drift} hold.
Let the distribution $F$ of $Y_1$ belongs to ${\mathcal S}^*$.
If $c\le 0$, then
\begin{eqnarray*}
\P\{M_\tau>x\} &=&
\frac{1+o(1)}{a} \E\int_x^{x+aN_\tau}\overline F(y)dy
\quad\mbox{as }x\to\infty,
\end{eqnarray*}
uniformly for all random times $\tau\in[0,\infty]$
that satisfy \eqref{tau.n.crp} in Definition \ref{defindfuture}.
If $c>0$ and
\begin{eqnarray}\label{cond.o}
\P\{cT_1>x\}=o(\overline F(x))\quad\mbox{as }x\to\infty,
\end{eqnarray}
then
\begin{eqnarray*}
\P\{M_\tau>x\} &=&
\frac{1+o(1)}{a} \E\int_x^{x+aN_\tau}\overline F(y)dy+O(\P\{cT_1>x\}).
\end{eqnarray*}
\end{Theorem}

Hereinafter, we write $f(x,\tau)=o(g(x,\tau))$ as $x\to\infty$
uniformly for all $\tau$ if
$$
\sup_\tau\biggl|\frac{f(x,\tau)}{g(x,\tau)}\biggr|\to 0\quad\mbox{as }x\to\infty.
$$

Note that a result similar to Theorem \ref{th:stopping.1.crp}
has been obtained recently in the case of non-random times.

\begin{Theorem}[{\cite[Theorem 1]{K2018}}]\label{th:stopping.3.crp}
Under the conditions of Theorem \ref{th:stopping.1.crp},
\begin{eqnarray*}
\P\{M_t>x\} &\sim&
\frac{1}{a} \int_0^{a\E N_t}\overline F(x+y)dy
\quad\mbox{as }x\to\infty\mbox{ uniformly for all }t\in(0,\infty].
\end{eqnarray*}
\end{Theorem}

One can see the difference in results of Theorems \ref{th:stopping.1.crp}
and \ref{th:stopping.3.crp}:
in Theorem \ref{th:stopping.1.crp}, the expectation of the integral is taken
while in Theorem \ref{th:stopping.3.crp},
for constant $\tau$, it is simplified to an integral with constant upper limit.
We have to comment that, despite the fact that the results look similar,
the extension to random times requires a lot of effort in the case where the
random time $\tau$ depends on the process -- say like stopping times.
Our approach is based on the introduction of special i.i.d.\ cycles
that allows us to handle random times that do not depend
on the future increments.
We like to underline as well that our approach  produces the respective asymptotics
for random walks and for general L\'evy processes.
Note also that such an extension is really needed in many applications,
say in risk or in queueing theory where we often need to know how likely
it is to exceed a high level within some time cycle or similar random time interval.

A more general than renewal setting was studied by Tang \cite{Tang2004}
where the author obtained uniform asymptotics for the supremum over time intervals $[0,t]$
for a counting process that satisfies a law of large numbers.

Let us consider a {\it random walk}  $S_0=0$, $S_n:=Y_1+\ldots+Y_n$, $n\ge 1$
and its partial maxima $M_n:=\max(S_0,\ldots,S_n)$.
Since a random walk may be considered as a particular case of a compound
renewal process with constant jump epochs $T_n=n$,
we derive from Theorem \ref{th:stopping.1.crp} the following result
which replicates and corrects several results from
\cite{Asmussen, FTZ, FPZ, FZ, Greenwood, GreenwoodMonroe, KKM};
see also \cite{FKZ} for further references.

\begin{Theorem}\label{th:stopping.1}
Let $\E Y_1=:-a<0$. If the distribution $F$ of $Y_1$ belongs to ${\mathcal S}^*$, then
\begin{eqnarray*}
\P\{M_\tau>x\} &=&
\frac{1+o(1)}{a} \E\int_0^{a\tau}\overline F(x+y)dy\quad\mbox{as }x\to\infty,
\end{eqnarray*}
uniformly for all counting random variables $\tau\in[0,\infty]$
that do not depend on the future jumps of $S_n$.
\end{Theorem}

First results on the asymptotics of randomly stopped sequences with independent increments
are due to Greenwood \cite{Greenwood} and
Greenwood and Monroe \cite{GreenwoodMonroe}
where a case of a bounded or regularly varying at infinity stopping time $\tau$
and regularly varying at infinity $F$ is considered.

Asmussen \cite{Asmussen} obtained this result for the
proper hitting time $\tau=\min\{n\ge 1: S_n\le 0\}$
with $\E\tau=1/\P\{S_\infty\le 0\}$, where the right hand side
is asymptotically equal to $\E\tau\overline F(x)$.

In \cite{FPZ}, a more general statement than Theorem \ref{th:stopping.1} is presented,
it concerns up-crossing by a random walk of a family of high level non-linear boundaries,
and its proving technique is based on Asmussen's \cite{Asmussen} result on the tail asymptotics
for the maximum of a random walk until it hits the negative half-line.
However, the main weakness of the whole paper \cite{FPZ} is that it deals
with both stopping times (that are deterministic functions of the trajectory
of the random walk) and times independent of the future
(in the sense of Definition 1 above), with naming both random times
as ``stopping times'' and omitting their formal definitions.
This leads to confusion and uncertainties to the reader,
and to incompleteness of the  proof of the key Lemma 1 there.
The lemma states some asymptotic results which hold uniformly
for all stopping times from a family $\mathcal T_\varphi:=\{\sigma:0\le\sigma\le\varphi\}$
where $\varphi\ge 0$ is a stopping time such that $\E\varphi<\infty$,
i.e.\ the family of stopping times possesses an integrable majorant.
Then its proof starts with saying that, without loss of generality,
it suffices to assume that $\sigma\ge 1$, and then this assumption is essentially
used, together with the existence of an integrable majorant.
However, the latter reduction is not supported by an argument and may fail
if the top random variable is an independent of the future time which is not a stopping time.
In particular, the minimum of two independent of the future times
may depend on the future, and this should be avoided
(see more comments and remarks in Section 2).
This whole issue can be fixed by means of Lemma \ref{l:conditioning} below.

Our proof of Theorem \ref{th:stopping.1.crp} contains also novel methodological arguments,
in the sense that it does not use the down- and upcrossing arguments as
in \cite{Asmussen} and then in \cite{FPZ} and is based on the local renewal theorem instead,
that makes the proof self-contained and more straightforward.

Notice that Theorem \ref{th:stopping.1} generalises the following result for partial maxima
of a random walk over non-random time intervals.

\begin{Theorem}[{\cite[Theorem 5.3]{FKZ}}]\label{th:stopping.3}
Under the conditions of Theorem \ref{th:stopping.1},
\begin{eqnarray*}
\P\{M_n>x\} &\sim&
\frac{1}{a} \int_0^{an}\overline F(x+y)dy
\quad\mbox{as }x\to\infty\mbox{ uniformly for all }n\ge 1.
\end{eqnarray*}
\end{Theorem}

Now let $X$ be a {\it L\'evy process} starting at the origin, that is,
a c\`adl\`ag stochastic process
(i.e. almost all its paths are right continuous with existing left limits everywhere)
with stationary independent increments,
where stationarity means that, for $s<t$, the probability distribution of
$X_t-X_s$ depends only on $t-s$ and where the independence of increments means that,
for all $k\ge2$ and for all $s_1<t_1\le s_2<t_2\le\ldots\le s_k <t_k$,
the differences $X_{t_i}-X_{s_i}$, $1\le i \le k$ are mutually independent random variables.

Stemming from Theorem \ref{th:stopping.1.crp} above,
by considering the compound Poisson component of $X$ with linear component,
one can find the correct asymptotic lower and upper bounds for the tail of $M_\tau$.
Our main result for a L\'evy process is the following theorem.

\begin{Theorem}\label{th:stopping.1.lp}
Let $\E X_1=:-m<0$. If the distribution of $X_1$ belongs to ${\mathcal S}^*$,
then, for some $\beta>0$,
\begin{eqnarray*}
\P\{M_\tau>x\} &=&
\frac{1+o(1)}{m} \E\int_x^{x+m\tau}\P\{X_1>y\}dy+O(e^{-\beta x})
\quad\mbox{as }x\to\infty,
\end{eqnarray*}
uniformly for all random times $\tau\in[0,\infty]$
that do not depend on the future increments of $X$.
\end{Theorem}

First results on the asymptotics of randomly stopped L\'evy processes
go back to Greenwood and Monroe \cite{GreenwoodMonroe} who considered
a particular case of regularly varying at infinity distributions $F$ and stopping times $\tau$.
The case of independent sampling was considered in Korshunov \cite[Theorem 10]{K2018}.

It has been suggested by Asmussen and Kl\"uppelberg \cite[Sect. 1.1]{AK}
and by Asmussen \cite[Sect. 2.4]{Asmussen} to follow a discrete skeleton argument
in order to prove these asymptotics for $\tau=\infty$
when the integrated tail of the L\'evy measure is subexponential;
notice that this approach requires additional considerations
which take into account fluctuations of L\'evy processes within time slots.

In Doney et al.\ \cite{DKM} the passage time problem has been considered
for L\'evy processes, emphasising heavy tailed cases;
local and functional versions of limit distributions are derived
for the passage time itself, as well as for the position
of the process just prior to passage, and the overshoot of a high level
which is an extension for L\'evy processes of corresponding results
for random walks, see e.g.\ Foss et al.\ \cite[Theorem 5.24]{FKZ}.

Notice that the conclusions of both Theorems \ref{th:stopping.1.crp} and \ref{th:stopping.1.lp}
are not so trivial. For example, one could doubt if they really hold true uniformly for all $\tau$.
At first glance, the uniformity may fail for the family of hitting times
$\tau(x)=\inf\{t\ge 0: X_t>x\}$, $x>0$. However, for such $\tau(x)$, on the one hand
\begin{eqnarray*}
\P\{M_{\tau(x)}>x\} &=& \P\{\tau(x)<\infty\}
\ =\  \P\{M_\infty>x\}\ \sim\ \frac{1}{m}\int_x^\infty \overline F(y)dy
\quad\mbox{as }x\to\infty,
\end{eqnarray*}
while on the other hand,
\begin{eqnarray*}
\E\int_x^{x+m\tau(x)}\overline F(y)dy &\le& \int_x^\infty \overline F(y)dy,
\end{eqnarray*}
and
\begin{eqnarray*}
\E\int_x^{x+m\tau(x)}\overline F(y)dy &\ge& \P\{\tau(x)=\infty\}\int_x^\infty \overline F(y)dy
\ \sim\ \int_x^\infty \overline F(y)dy,
\end{eqnarray*}
due to $\P\{\tau(x)=\infty\}\to 1$ as $x\to\infty$, so
\begin{eqnarray*}
\E\int_x^{x+m\tau(x)}\overline F(y)dy &\sim& \int_x^\infty \overline F(y)dy
\quad\mbox{as }x\to\infty.
\end{eqnarray*}

The paper is organised as follows.
In Section \ref{sec:examples} the property of the independence of the future increments
given in Definition \ref{indinc} is discussed in detail.
In Section \ref{taudep.crp} we prove Theorem \ref{th:stopping.1.crp}
on the compound renewal process.
For a compound Poisson process, the tail asymptotics may be significantly
improved in what concerns the upper limit
of the integral, it is done in Section \ref{taudep.cPp}.
Next, in Section \ref{taudep.lp}
we prove Theorem \ref{th:stopping.1.lp} on L\'evy processes.
Finally, Appendix includes the proofs of three auxiliary results.

\section{Independence of the future increments -- discussion and  further examples}
\label{sec:examples}

We discuss now Definitions \ref{indinc} and \ref{defindfuture}.
Firstly observe the following properties of independent of the future increments random times:
\begin{itemize}
\item   while the minimum of two stopping times is a stopping time,
the same property for independent of the future increments random times fails, in general;
\item   the minimum of an independent of the future increments random time
and of a stopping time does not depend on the future increments, too;
\item    for a general compound renewal processes (with a possible linear drift),
an independent time (hence e.g.\ a constant time)
depends on the future increments, in the sense of (more-or-less standard) Definition \ref{indinc}.
We have eliminated this conceptual problem
by introducing a novel Definition \ref{defindfuture}, that involves embedded time instants.
Note that, in particular cases, one can deal with constant times using
another approach based on lower and upper bounds, see e.g. Example 6 in \cite{FPZ}.
\end{itemize}

We give a number of examples that clarify our results and the two definitions
of the independence of the future.
We focus first on the case when $X$ is a compound renewal process
with linear component defined in \eqref{compound}
and when all the assumptions of Theorem \ref{th:stopping.1.crp} hold true.

\begin{Example}\label{example3.1.1}\rm
If $\tau$ does not depend on the future increments of $X$
in the sense of Definition \ref{defindfuture}, then the two $\sigma$-algebras
$\sigma\{X_s,\ s<T_n,\ \I\{\tau=T_n\}\}$ and $\sigma\{X_{T_n+v}-X_{T_n-0},\ v\ge 0\}$
are independent for all $n\ge 1$.
\end{Example}

\begin{Example}\label{example3.1}\rm
Any random time $\tau<T_1$ a.s. does not depend on the future increments of $X$
in the sense of Definition \ref{defindfuture}. In this case $N_\tau=0$
and the integral in the conclusion of Theorem \ref{th:stopping.1.crp} is zero.
This example shows why the additional term $O(\P\{cT_1>x\})$
cannot be avoided in the case $c>0$ in Theorem \ref{th:stopping.1.crp}, in general.
\end{Example}

\begin{Example}\label{example3}\rm
Let $\{\tau_n\}_{n\ge 1}$ be a family of non-negative random variables independent of $X$
with $p_n:=\P\{\tau_n =\infty\}$. 
Then,
\begin{eqnarray*}
\P\{M_{\tau_n}>x\} &=&
\frac{p_n+o(1)}{a}\int_x^\infty \overline F(y)dy
+\frac{1-p_n}{a}\E \Bigl\{\int_x^{x+aN_{\tau_n}}\overline F(y)dy\Big |\tau_n<\infty\Bigr\}
+o(\overline F(x))\quad\mbox{as }x\to\infty,
\end{eqnarray*}
uniformly for all $n\ge 1$. If additionally $\sup_n \E\{\tau_n\mid\tau_n<\infty\}<\infty$
and $\inf_np_n>0$, then,
since $\overline F(x)=o(\int_x^\infty \overline F(y)dy)$, we have
\begin{eqnarray*}
\P\{M_{\tau_n}>x\} &\sim&
\frac{p_n}{a}\int_x^{\infty}\overline F(y)dy
\quad\mbox{as }x\to\infty\ \mbox{uniformly for all }n\ge 1.
\end{eqnarray*}
\end{Example}

\begin{Example}\label{example8}\rm
Assume that $\{T_n-T_{n-1}\}$ are non-degenerate;
let $a>0$ be such that $\P\{T_n-T_{n-1}>a\} \in (0,1)$.
Let $\nu_1 < \nu_2 < \ldots$\ be the consecutive
times when $T_{\nu_k}-T_{\nu_k-1} >a$.  Let
$\eta$ be an independent non-degenerate counting random variable.
Then $\tau = T_{\nu_{\eta}}$ satisfies Definition \ref{defindfuture}.
\end{Example}

Now we present a number of examples of independent of the
future random times that are not stopping times.
We start with examples for random walks.
Let $S_n = \sum_{i=1}^n \xi_i$, $n\ge 0$ be a random walk
with i.i.d.\ increments.

\begin{Example}\label{example6}\rm
Let $\P\{\xi_1>0\}>0$ and $\P\{\xi_1<0\}>0$.
Let $\eta$ be an independent non-degenerate counting random variable, let
$\tau_1 := \min \{n>0: \xi_n <0 \}$ and let
$\tau_k := \min \{ n>\tau_{k-1}: \xi_n<0\}$ for $k\ge 2$.
Then a proper random variable $\tau =\tau_{\eta}$ does not depend
on the future of random variables $\{\xi_n\}$, but it is not a stopping time.
\end{Example}

\begin{Example}\label{example.hitting.r}\rm
Let $B\in\mathbb B(\R)$. Define hitting times $\tau_1:=\min\{n\ge 1: S_n\in B\}$
and, for all $k\ge 2$, $\tau_k:=\min\{n>\tau_{k-1}: S_n\in B\}$.
Let $\eta$ be an independent counting random variable.
Then (possibly improper) random variable $\tau_\eta$ does not depend
on the future of random variables $\{\xi_n\}$, but it is not a stopping time.
\end{Example}

\begin{Example}\label{example7}\rm
Let $g$ be a measurable function on the real line,
taking values in $(0,1)$ and  such that $g(x)\neq g(y)$ for $x\neq y$.
Let  $\{\zeta_z\}_{z\in (0,1)}$ be a family of geometric random variables,
with $\zeta_z$ having parameter $z$, that do not depend on the random walk.
Then $\tau = 1 +\zeta_{g(\xi_1)}$
is a random time that does not depend on the future of random variables $\{\xi_n\}$,
but is not a stopping time.
\end{Example}

\begin{Example}\label{example9}\rm
Let $X_t$ be a L\'evy process. For a fixed $a>0$, let
$\tau_1 := \inf \{t>0: \Delta X_t >a \}$ and let
$\tau_k := \inf\{t>\tau_{k-1}: \Delta X_t >a\}$ for $k\ge 2$.
Then, for an independent non-degenerate counting random variable $\eta$,
$\tau_\eta$ satsfies Definition \ref{indinc}.
\end{Example}

\section{Proof of Theorem \ref{th:stopping.1.crp} for compound renewal process with linear drift}
\label{taudep.crp}

The proof of Theorem \ref{th:stopping.1.crp} is split into two parts,
the lower bound is considered in Proposition \ref{l:stopping.+2.crp}
and the upper bound in Proposition \ref{l:stopping.+3.crp}.

\begin{Proposition}\label{l:stopping.+2.crp}
Let $-a=\E(cT_1+Y_1)<0$. If $F$ is long-tailed, then
\begin{eqnarray*}
\P\{M_\tau>x\} &\ge&
\frac{1+o(1)}{a} \E\int_x^{x+aN_\tau}\overline F(y)dy
\quad\mbox{as }x\to\infty,
\end{eqnarray*}
uniformly for all random times $\tau\in[0,\infty]$ that satisfy \eqref{tau.n.crp}.
\end{Proposition}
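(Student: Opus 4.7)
The plan is to prove this lower bound via the single-big-jump principle, exploiting the conditional independence encoded in \eqref{tau.n.crp}. Start from the inclusion
\begin{eqnarray*}
\{M_\tau>x\} &\supseteq& \bigcup_{n\geq 1}\{N_\tau\geq n,\ X_{T_n}>x\},
\end{eqnarray*}
and write $X_{T_n}=X_{T_n-}+Y_n$ with $X_{T_n-}=\sum_{i<n}Y_i+cT_n$ pre-$T_n$ measurable. The crossing event becomes $\{Y_n>x-X_{T_n-}\}$, so the task reduces to producing a good lower bound on the probability of $\bigcup_n\{N_\tau\geq n,\ Y_n>x-X_{T_n-}\}$.

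The computation of individual probabilities is the point at which \eqref{tau.n.crp} enters. Both $\{N_\tau\geq n\}=\{\tau\geq T_n\}$ and $X_{T_n-}$ lie in $\sigma\{X_s,\ s<T_n,\ \I\{\tau<T_n\}\}$, while $Y_n=X_{T_n}-X_{T_n-}$ lies in $\sigma\{X_s-X_{T_n-},\ s\geq T_n\}$, so $Y_n$ retains its distribution $F$ and is independent of the pair $(\I\{N_\tau\geq n\},X_{T_n-})$. Conditioning on the past of $T_n$ gives
\begin{eqnarray*}
\P\{N_\tau\geq n,\ Y_n>x-X_{T_n-}\} &=& \E\bigl[\I\{N_\tau\geq n\}\,\overline F(x-X_{T_n-})\bigr].
\end{eqnarray*}
Since $\E X_{T_n-}=-an-b$ and, by the strong law, $X_{T_n-}/n\to -a$ a.s., for any fixed small $\varepsilon>0$ the event $G_n(\varepsilon)=\{-(a+\varepsilon)n\leq X_{T_n-}\leq -(a-\varepsilon)n\}$ has probability tending to $1$. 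On $G_n(\varepsilon)$ one has $\overline F(x-X_{T_n-})\geq\overline F(x+(a+\varepsilon)n)$, and long-tailedness of $F$ eventually allows $\varepsilon\to 0$. Summing over $n$ and reading the Riemann sum $\sum_n\overline F(x+an)\I\{N_\tau\geq n\}$ as the integral $\frac{1}{a}\int_x^{x+aN_\tau}\overline F(y)\,dy$ would yield the desired right-hand side.

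The main obstacle is converting this into a genuine \emph{lower} bound on the probability of the union, since the events $\{N_\tau\geq n,\ Y_n>x-X_{T_n-}\}$ are not pairwise disjoint. To remedy this I would intersect each with the pre-$T_n$ condition $\{Y_i\leq h(x)\text{ for all }i<n\}$ for a truncation level $h(x)\to\infty$, $h(x)=o(x)$; on the good event $G_n(\varepsilon)$ one has $x-X_{T_n-}\geq x+(a-\varepsilon)n>h(x)$ for all large $x$, so on these restricted events $Y_n>h(x)$, which makes $n$ the index of the first "big" jump and renders the events pairwise disjoint. The extra "no earlier big jump" factor is pre-$T_n$ measurable, so it enters the independence computation cleanly, and its probability $(1-\overline F(h(x)))^{n-1}$ stays close to $1$ as long as $h(x)$ grows quickly enough. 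The genuine subtlety of the statement, \emph{uniformity} over all random $\tau$, forces one to make the LLN replacement and the truncation quantitative in a way that does not depend on the tail of $N_\tau$, so that the residual errors can be absorbed into the additive $o(\overline F(x))$ in the conclusion; this is the step where the paper's "special i.i.d.\ cycles" construction mentioned in the introduction is likely to do the heavy lifting.
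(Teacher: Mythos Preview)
Your overall architecture---lower-bound $\P\{M_\tau>x\}$ by single-big-jump events at the renewal epochs $T_n$, factor $Y_n$ out via \eqref{tau.n.crp}, replace $X_{T_n-}$ by its LLN value $-an$, and read the resulting sum as the integral---is exactly the paper's plan. The point where you diverge, and where there is a real gap, is the disjointness device. You intersect with $\{Y_i\le h(x)\text{ for all }i<n\}$ and claim the resulting factor $(1-\overline F(h(x)))^{n-1}$ ``stays close to $1$ as long as $h(x)$ grows quickly enough''. This fails on two counts. First, $\{Y_i\le h(x),\,i<n\}$ and $\{N_\tau\ge n\}$ are both pre-$T_n$ measurable and in general correlated, so this product form is not what you actually get after factoring with \eqref{tau.n.crp}. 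Second, even when it is (say $\tau$ independent of $X$), for fixed $x$ the factor decays geometrically in $n$; since uniformity must cover $\tau=\infty$, the truncated sum $\sum_n(1-\overline F(h(x)))^{n-1}\overline F(x+an)$ can be strictly smaller than $\sum_n\overline F(x+an)$---only long-tailedness is assumed, so the admissible $h(x)$ may grow arbitrarily slowly and there is no guarantee that $1/\overline F(h(x))$ outpaces the range of $n$ that matters. The paper instead intersects with $\{M_{T_n-0}\le x\}$: then $n$ is the first crossing epoch, the events are automatically disjoint, and removing this extra condition costs at most $\P\{M_\tau>x\}\sum_{n\ge0}\overline F(x+an)\le a^{-1}\overline F_I(x-a)\,\P\{M_\tau>x\}$, which is $o(1)\cdot\P\{M_\tau>x\}$ uniformly in $\tau$ and can simply be moved to the left-hand side as a $(1+o(1))$ factor.

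Your suspicion about the cycle construction is correct, but it is used for the \emph{other} error, not for disjointness. After the step above the paper writes the bound as $\Sigma_1-\Sigma_2$, where $\Sigma_1=\sum_n\P\{\tau\ge T_n\}\overline F(x+(a+\varepsilon)(n-1)+h(x))$ and $\Sigma_2$ carries the complementary LLN event $\{X_{T_n-0}\le-(a+\varepsilon)(n-1)-h(x)\}$. To show $\Sigma_2=o(\Sigma_1)+o(\overline F(x))$ uniformly in $\tau$, one introduces ladder-type stopping times $\theta_k=T_{j_k}$ at which $X_{\theta_k-0}>-j_k(a+\varepsilon)$, so that the bad event at time $T_n$ forces the cycle minimum $L_k=\min_{\theta_k<T_i\le\theta_{k+1}}(X_{T_i-0}-X_{\theta_k-0}+(i-j_k)(a+\varepsilon))$ to fall below $-h(x)$. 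The $(\theta_{k+1}-\theta_k,L_k)$ are i.i.d.\ with $\E\theta_1<\infty$, and by \eqref{tau.n.crp} they are independent of $\{\tau\ge\theta_k\}$; this gives $\Sigma_2\le\E\{\theta_1;\,L_0\le-h(x)\}\sum_k\P\{\tau\ge T_k\}\overline F(x+(a+\varepsilon)k+h(x))$, and the prefactor tends to $0$ by dominated convergence, uniformly in $\tau$.
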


\begin{proof}
The random time $\widetilde\tau:=\max\{T_n:\ T_n\le\tau\}$
does not depend on the future increments of $X$ as $\tau$ does not.
In addition, $M_\tau\ge M_{\widetilde\tau}$
whatever the sign of $c$ and $N_\tau=N_{\widetilde\tau}$, so
\begin{eqnarray*}
\E\int_x^{x+aN_\tau}\overline F(y)dy &=&
\E\int_x^{x+aN_{\widetilde\tau}}\overline F(y)dy.
\end{eqnarray*}
Therefore,
without loss of generality it is enough to consider the case where
$\tau$ only takes values from $\{T_n,\ n\ge 0\}$.
Then, by Lemma \ref{l:conditioning}, it suffices to consider the case where $\tau\ge T_1$.

As the distribution $F$ is long-tailed, there exists a function $h(x)\uparrow\infty$
as $x\to\infty$ such that (see, e.g. \cite[Lemma 2.19]{FKZ})
\begin{eqnarray}\label{F.h.x}
\overline F(x+h(x)) &\sim& \overline F(x)\quad\mbox{as }x\to\infty.
\end{eqnarray}
Since the events $\{M_{T_n-0}\le x,X_{T_n}>x,\tau\ge T_n\}$ are disjoint
and each of them implies that $M_\tau>x$, we have
\begin{eqnarray}\label{Mtaurep.crp}
\P\{M_\tau>x\}
&\ge& \sum_{n=1}^\infty \P\{M_{T_n-0}\le x,X_{T_n}>x,\tau\ge T_n\}\nonumber\\
&\ge& \sum_{n=1}^\infty \P\{M_{T_n-0}\le x,X_{T_n-0}>-(a+\varepsilon)(n-1)-h(x),
X_{T_n}>x,\tau\ge T_n\},\nonumber\\
&\ge& \sum_{n=1}^\infty \P\{M_{T_n-0}\le x,
X_{T_n-0}>-(a{+}\varepsilon)(n{-}1){-}h(x),\nonumber\\&&\qquad Y_n>x{+}(a{+}\varepsilon)(n{-}1){+}h(x),\tau\ge T_n\},
\end{eqnarray}
for any fixed $\varepsilon>0$. Since $\{\tau\ge T_n\}=\overline{\{\tau<T_n\}}$ and
owing to the condition \eqref{tau.n.crp}, the last series equals to
\begin{eqnarray*}
\lefteqn{\sum_{n=1}^\infty \P\{M_{T_n-0}\le x,X_{T_n-0}>-(a{+}\varepsilon)(n{-}1){-}h(x),
\tau\ge T_n\}
\overline F(x+(a{+}\varepsilon)(n{-}1){+}h(x))}\\
&=& \sum_{n=1}^\infty \P\{X_{T_n-0}>-(a+\varepsilon)(n-1)-h(x),\tau\ge T_n\}
\overline F(x+(a+\varepsilon)(n-1)+h(x))\\
&&-\sum_{n=1}^\infty \P\{M_{T_n-0}>x,X_{T_n-0}>-(a+\varepsilon)(n-1)-h(x),\tau\ge T_n\}\nonumber\\&&\qquad
\overline F(x+(a+\varepsilon)(n-1)+h(x))\\
&\ge& \sum_{n=1}^\infty \P\{X_{T_n-0}>-(a+\varepsilon)(n-1)-h(x),\tau\ge T_n\}
\overline F(x+(a+\varepsilon)(n-1)+h(x))\nonumber\\&&\qquad
-\P\{M_\tau>x\}\sum_{n=0}^\infty \overline F(x{+}an).
\end{eqnarray*}
Therefore, since $\sum_{n=0}^\infty \overline F(x+an)\le a^{-1}\overline F_I(x-a)$,
it follows from \eqref{Mtaurep.crp}  that
\begin{eqnarray}\label{Mtaubelowsigma2.crp}
\P\{M_\tau>x\}
&\ge& \frac{1}{1+a^{-1}\overline F_I(x-a)}
\sum_{n=1}^\infty \P\{X_{T_n-0}>-(a+\varepsilon)(n-1)-h(x),\tau\ge T_n\}
\nonumber\\&&\qquad \overline F(x{+}(a{+}\varepsilon)(n{-}1){+}h(x))\nonumber\\
&=& (1+o(1)) \sum_{n=1}^\infty \P\{X_{T_n-0}>-(a+\varepsilon)(n-1)-h(x),\ \tau\ge T_n\}
\nonumber\\&&\qquad\overline F(x+(a+\varepsilon)(n-1)+h(x))
\end{eqnarray}
as $x\to\infty$ because $\overline F_I(x-a)\to 0$; here $o(1)$ does not depend on $\tau$.
Let us decompose the last sum as follows:
\begin{eqnarray}
\lefteqn{\sum_{n=1}^\infty \P\{\tau\ge T_n\}
\overline F(x+(a+\varepsilon)(n-1)+h(x))}\nonumber\\
&&-\sum_{n=1}^\infty \P\{X_{T_n-0}\le -(a{+}\varepsilon)(n{-}1){-}h(x),\tau\ge T_n\}
\overline F(x{+}(a{+}\varepsilon)(n{-}1){+}h(x))\nonumber\\
&=:& \Sigma_1-\Sigma_2. \label{sigma12.crp}
\end{eqnarray}
To bound the value of $\Sigma_2$, we introduce recursively a sequence
$\{\theta_k\}_{k\ge 0}$ of stopping times by letting $\theta_0=0$ and, for $k\ge 0$,
\begin{align*}
j_{k+1}
 := \min \{i : T_i>\theta_k: \ X_{T_i-0}-X_{\theta_k-0} > -(i-j_k)(a+\varepsilon)\},
\end{align*}
and $\theta_{k+1}=T_{j_{k+1}}$, so that $X_{\theta_k-0} > -j_k(a+\varepsilon)$.
By construction, $\theta_k-\theta_{k-1}$, $k\ge 1$, are i.i.d.\ random variables and
\begin{eqnarray}\label{theta.crp}
\E\theta_1<\infty.
\end{eqnarray}
Introduce the minima over disjoint time intervals
\begin{align*}
L_k := \min_{ \theta_k<T_i\le\theta_{k+1}}
(X_{T_i-0}-X_{\theta_k-0}+(i-j_k)(a+\varepsilon)),\quad k\ge 0,
\end{align*}
and notice that
\begin{eqnarray}\label{L.crp}
\{L_k,\ k\ge 0\}\quad\mbox{are i.i.d.\ proper random variables}.
\end{eqnarray}
Then, for all $n\ge 1$,
\begin{eqnarray*}
\lefteqn{\P\{X_{T_n-0}\le -(a+\varepsilon)(n-1)-h(x),\tau\ge T_n\}}\\
&=& \sum_{k=0}^{n-1} \P \{\theta_k<T_n\le \theta_{k+1},
X_{T_n-0} \le -(a+\varepsilon)(n-1)-h(x), \tau\ge T_n\}\\
&\le& \sum_{k=0}^{n-1}\P\{\theta_k<T_n\le \theta_{k+1},
X_{T_n-0}-X_{\theta_k-0}+(a+\varepsilon)(n{-}1{-}j_k) \le -h(x),\tau\ge T_n\}\\
&\le& \sum_{k=0}^{n-1} \P\{\theta_k<T_n\le \theta_{k+1},
L_k \le -h(x),\tau\ge\theta_k\}.
\end{eqnarray*}
Hence,
\begin{eqnarray*}
\Sigma_2 &\le&
\sum_{k\ge 0} \sum_{n\ge k+1}\P\{\theta_k<T_n\le \theta_{k+1}, L_k \le -h(x),\tau\ge \theta_k\}
\overline F(x{+}(a{+}\varepsilon)(n{-}1){+}h(x)) \\
&=&
\sum_{k\ge 0} \biggl(\E {\mathbb I}\{\tau\ge\theta_k\}\sum_{n\ge k+1}{\mathbb I}\{
\theta_k<T_n\le \theta_{k+1}, L_k \le -h(x)\}\biggr)
\overline F(x{+}(a{+}\varepsilon)k{+}h(x)) \\
&=& \sum_{k\ge 0} \E \left[{\mathbb I}\{\tau\ge\theta_k\}(\theta_{k+1}-\theta_k)
{\mathbb I}\{L_k \le -h(x)\}\right] \overline F(x+(a+\varepsilon)k+h(x)).
\end{eqnarray*}
By the condition \eqref{tau.n.crp},
the event $\{\tau\ge\theta_k\}=\overline{\{\tau<\theta_k\}}$
does not depend on the random variable
$(\theta_{k+1}-\theta_k){\mathbb I}\{L_k\le -h(x)\}\in\sigma\{X_t-X_{\theta_k-0}, t>\theta_k\}$,
hence we conclude that the last sum is equal to
\begin{eqnarray*}
\lefteqn{\sum_{k\ge 0} \P\{\tau\ge\theta_k\}\E\{\theta_{k+1}-\theta_k;\ L_k \le -h(x)\}
\overline F(x+(a+\varepsilon)k+h(x))}\\
&\le& \sum_{k\ge 0} \P\{\tau\ge T_k\}\E\{\theta_{k+1}-\theta_k;\ L_k \le -h(x)\}
\overline F(x+(a+\varepsilon)k+h(x)),
\end{eqnarray*}
due to $T_k\le \theta_k$. It follows from \eqref{theta.crp} and \eqref{L.crp} that
\begin{align*}
\sup_k \E\{\theta_{k+1}-\theta_k;\ L_k \le -h(x)\} &\to 0\quad\mbox{as }x\to\infty,
\end{align*}
which implies that, as $x\to\infty$,
\begin{eqnarray*}
\Sigma_2 &=& o(1) \sum_{n\ge 0} \P\{\tau\ge T_n\} \overline F(x+(a+\varepsilon)n+h(x))
\ =\ o(\overline F(x))\nonumber\\&&\qquad+o(1)
\sum_{n\ge 1} \P\{\tau\ge T_n\} \overline F(x+(a+\varepsilon)n+h(x)).
\end{eqnarray*}
Thus we derive from \eqref{Mtaubelowsigma2.crp} and \eqref{sigma12.crp} that
\begin{eqnarray*}
\P\{M_\tau>x\}
&\ge& (1+o(1)) \sum_{n=1}^\infty \P\{\tau\ge T_n\}
\overline F(x+(a+\varepsilon)(n-1)+h(x))
+o(\overline F(x))\\
&=& (1+o(1))\sum_{n=1}^\infty \P\{\tau\ge T_n\} \overline F(x+(a+\varepsilon)(n-1))
+o(\overline F(x)),
\end{eqnarray*}
as $x\to\infty$, owing to \eqref{F.h.x}; here $o(1)$ does not depend on $\tau$.
Since $\P\{\tau\ge T_1\}=1$, the last term can be incorporated into the sum,
\begin{eqnarray*}
\P\{M_\tau>x\}
&\ge& (1+o(1))\sum_{n=1}^\infty \P\{\tau\ge T_n\} \overline F(x+(a+\varepsilon)(n-1)).
\end{eqnarray*}
The last sum equals
\begin{eqnarray}\label{l.b.2.crp}
\lefteqn{\sum_{n=1}^\infty \P\{\tau\in[T_n,T_{n+1})\}
\sum_{k=1}^n \overline F(x+(a+\varepsilon)(k-1))
+\P\{\tau=\infty\}\sum_{n=0}^\infty \overline F(x+(a+\varepsilon)n)}\nonumber\\
&&\hspace{3mm}\ge\  \frac{1}{a{+}\varepsilon}\biggl(
\sum_{n=1}^\infty \P\{\tau\in[T_n,T_{n+1})\} \int_x^{x+(a+\varepsilon)n} \overline F(y)dy
+\P\{\tau{=}\infty\}\int_x^\infty \overline F(y)dy\biggr)\nonumber\\
&&\hspace{3mm}=\ \frac{1}{a+\varepsilon}\E \int_x^{x+(a+\varepsilon)N_\tau}
\overline F(y)dy
\ \ge\ \frac{1}{a+\varepsilon}
\E \int_x^{x+aN_\tau} \overline F(y)dy
\end{eqnarray}
for all random variables $\tau$ that satisfy \eqref{tau.n.crp}.
Since the  choice of $\varepsilon>0$ is arbitrary, we conclude the lower bound.
\end{proof}

\begin{Proposition}\label{l:stopping.+3.crp}
Let $\E(cT_1+Y_1)=-a\le0$ and let $F\in{\mathcal S}^*$.
If $c\le 0$ then
\begin{eqnarray*}
\P\{M_\tau>x\} &\le&
\frac{1+o(1)}{a} \E\int_0^{aN_\tau}\overline F(x+y)dy\quad\mbox{as }x\to\infty,
\end{eqnarray*}
uniformly for all random times $\tau\in[0,\infty]$ that satisfy \eqref{tau.n.crp}.
If $c>0$ and the condition \eqref{cond.o} holds, then
\begin{eqnarray*}
\P\{M_\tau>x\} &\le&
\frac{1+o(1)}{a} \E\int_0^{aN_\tau}\overline F(x+y)dy+O(\P\{cT_1>x\}).
\end{eqnarray*}
\end{Proposition}

\begin{proof}
The maximum is always attained at a jump associated epoch,
either prior to or at the jump epoch.

Firstly consider the case where $c\le 0$ and hence the maximum is attained at a jump epoch.
As in the last proof, the random time $\widetilde\tau:=\max\{T_n:\ T_n\le\tau\}$
does not depend on the future increments of $X$.
Due to $c\le 0$, $M_\tau=M_{\widetilde\tau}$. In addition, $N_\tau=N_{\widetilde\tau}$.
Therefore, without loss of generality it is enough to consider the case where
$\tau$ only takes values from $\{T_n,\ n\ge 0\}$.
Then, by Lemma \ref{l:conditioning}, it suffices to consider the case where $\tau\ge T_1$.

Define $\theta_0=0$. Fix an $\varepsilon\in(0,a)$. By the strong law of large numbers,
there exists a level $A=A(\varepsilon)<\infty$ such that the stopping time
\begin{eqnarray*}
\theta_1\ =\ \theta_1(A) &:=& \inf\{T_n:\ X_{T_n}>n(-a+\varepsilon)+A\}
\end{eqnarray*}
is finite with small probability,
\begin{eqnarray}\label{p.A.to.0}
p\ =\ p(A)\ :=\ \P\{\theta_1(A)<\infty\} &\le& \varepsilon.
\end{eqnarray}
As $X_t\le A$ for all $t<\theta_1$, by the total probability law, for $x>A$,
\begin{eqnarray*}
\P\{X_{\theta_1\wedge\tau}>x\} &=& \sum_{n=1}^\infty
\P\{\theta_1=T_n\le\tau,\ X_{T_{n-1}}\le (n-1)(-a+\varepsilon)+A,\ X_{T_n}>x\}\\
&\le& \sum_{n=1}^\infty \P\{\tau\ge T_n,\ X_{T_{n-1}}\le (n-1)(-a+\varepsilon)+A,\ X_{T_n}>x\}\\
&\le& \sum_{n=1}^\infty\P\{\tau\ge T_n,\ Y_n>x+(n-1)(a-\varepsilon)-A\}.
\end{eqnarray*}
Since $\{\tau\ge T_n\}=\overline{\{\tau< T_n\}}$ and $\tau$ satisfies \eqref{tau.n.crp},
\begin{eqnarray*}
\P\{X_{\theta_1\wedge\tau}>x\} &\le&
\sum_{n=1}^\infty\P\{\tau\ge T_n\} \overline F(x+n(a-\varepsilon)-A-a)
\ =\ \E \sum_{n=1}^{N_\tau} \overline F(x+n(a-\varepsilon)-A-a).
\end{eqnarray*}
Since $\overline F$ is a decreasing function,
\begin{eqnarray*}
\P\{X_{\theta_1\wedge\tau}>x\} &\le&
\frac{1}{a-\varepsilon} \E \int_0^{aN_\tau} \overline F(x-A-a+y)dy.
\end{eqnarray*}
Taking into account that $M_{\theta_1\wedge\tau}\le \max(A,X_{\theta_1\wedge\tau})$,
we conclude an upper bound
\begin{eqnarray}\label{eq:upper.1.crp}
\P\{M_{\theta_1\wedge\tau}>x\} &=& \P\{X_{\theta_1\wedge\tau}>x\}
\ \le\ \overline G_\tau(x),\quad x>A,
\end{eqnarray}
where the distribution $G_\tau$ on $[A,\infty)$ defined by its tail as
\begin{eqnarray*}
\overline G_\tau(x) &:=& \min\biggl(1,\frac{1}{a-\varepsilon}
\E \int_0^{aN_\tau} \overline F(x-A-a+y)dy\biggr),
\quad x\ge A,
\end{eqnarray*}
is long-tailed because $F$ is so. Hence
\begin{eqnarray}\label{eq:upper.4.crp}
\overline G_\tau(x) &\sim& \frac{1}{a-\varepsilon} \E \int_0^{aN_\tau} \overline F(x+y)dy
\quad \mbox{as }x\to\infty\mbox{ uniformly for all }\tau\ge 0.
\end{eqnarray}
Notice that, by Fubini's Theorem,
\begin{eqnarray}\label{eq:G.tau.mu}
\frac{1}{a-\varepsilon} \E\int_0^{aN_\tau}\overline F(x+y)dy &=&
\frac{1}{a-\varepsilon} \int_0^\infty \P\{N_\tau\in dz\}\int_0^{az}\overline F(x+y)dy\nonumber\\
&=& \frac{1}{a-\varepsilon} \int_0^\infty \overline F(x+y)dy \P\{N_\tau>y/a\}
\ =\ \int_0^\infty\overline F(x+y)\mu_\tau(dy),
\end{eqnarray}
where the measure
$\mu_\tau(dy) := \frac{1}{a-\varepsilon} \P\{N_\tau>y/a\}dy$
satisfies the condition $\mu_\tau(x,x+1]\le 1/(a-\varepsilon)$ for all $x$ and $\tau$.

For $k\ge 2$, define recursively stopping times $\theta_k=\theta_k(A)$ as follows:
on the event $\{\theta_{k-1}<\infty\}$,
\begin{eqnarray*}
j_k &:=& \inf\{n: T_n>\theta_{k-1}:\
X_{T_n}-X_{\theta_{k-1}}>(n-j_{k-1})(-a+\varepsilon)+A\}
\end{eqnarray*}
and $\theta_k:=T_{j_k}$. Then, by the renewal structure of the process,
$\P\{\theta_k(A)<\infty\mid\theta_{k-1}(A)<\infty\}=\P\{\theta_1(A)<\infty\} = p(A)$.

Let $k\ge 1$. For all $m\le k$,
\begin{eqnarray*}
\P\{X_{\theta_m}{-}X_{\theta_{m-1}}>x,\ \theta_k{<}\infty,\ \theta_k\le\tau\}
&\le& \P\{X_{\theta_m}{-}X_{\theta_{m-1}}>x,\ \theta_k{<}\infty,\ \theta_m\le\tau\}\\
&=&
p^{k-m}(A)\P\{X_{\theta_m}{-}X_{\theta_{m-1}}>x,\ \theta_m<\infty,\ \theta_m\le\tau\},
\end{eqnarray*}
because the event $\{X_{\theta_m}-X_{\theta_{m-1}}>x,\ \theta_m<\infty,\ \theta_m\le\tau\}
=\{X_{\theta_m}-X_{\theta_{m-1}}>x,\ \theta_m<\infty,\ \overline{\theta_m>\tau}\}$
does not depend on the future increments, see \eqref{tau.n.crp}.
By the total probability law, for $x>0$,
\begin{eqnarray*}
\lefteqn{\P\{X_{\theta_m}-X_{\theta_{m-1}}>x,\ \theta_m<\infty,\ \theta_m\le\tau\}}\\
&=& \sum_{n=1}^\infty
\P\bigl\{\theta_m\le\tau,\ \theta_m=T_{j_{m-1}+n},\
X_{\theta_m}-X_{\theta_{m-1}}>x\bigr\}\\
&\le& \sum_{n=1}^\infty
\P\bigl\{\theta_m=T_{j_{m-1}+n},\ T_{j_{m-1}+n}\le\tau,\
X_{T_{j_{m-1}+n-1}}-X_{\theta_{m-1}}\le (n{-}1)(-a{+}\varepsilon){+}A,\nonumber\\&&\qquad
 X_{T_{j_{m-1}+n}}{-}X_{\theta_{m-1}}>x\bigr\}.
\end{eqnarray*}
Therefore,
\begin{eqnarray}\label{theta.m.upper}
\lefteqn{\P\{X_{\theta_m}-X_{\theta_{m-1}}>x,\ \theta_k<\infty,\ \theta_k\le\tau\}}\nonumber\\
&\le& p^{k-m}(A) \sum_{n=1}^\infty
\P\bigl\{T_{j_{m-1}+n}\le\theta_m,\ T_{j_{m-1}+n}\le\tau,\
Y_{j_{m-1}+n}>x{+}(n{-}1)(a{-}\varepsilon){-}A\bigr\}\nonumber\\
&=& p^{k-m}(A) \sum_{n=1}^\infty
\P\bigl\{T_{j_{m-1}+n}\le\theta_m,\ T_{j_{m-1}+n}\le\tau\bigr\}\nonumber\\&&\qquad
\overline F(x+(n-1)(a-\varepsilon)-A),
\end{eqnarray}
because the event
$\{T_{j_{m-1}+n}\le\theta_m,\ T_{j_{m-1}+n}\le\tau\} =
\{T_{j_{m-1}+n}\le\theta_m,\ \overline{T_{j_{m-1}+n}>\tau}\}$
does not depend on the future increments, see \eqref{tau.n.crp}.
Next, for $m\le k$ and $B<A$,
\begin{eqnarray}\label{T.T.P1.P2}
\lefteqn{\P\bigl\{T_{j_{m-1}+n}\le\theta_m,\ T_{j_{m-1}+n}\le\tau\bigr\}}\nonumber\\
&=& \P\bigl\{T_{j_{m-1}+n}\le\tau,\ T_{j_{m-1}+n}\le\theta_m,\nonumber\\&&\qquad
X_{T_{j_{m-1}+n}-0}-X_{T_{j_{m-1}}}\le A-B+(n-1)(-a+\varepsilon)\bigr\}\nonumber\\
&& +\P\bigl\{T_{j_{m-1}+n}\le\tau,\ T_{j_{m-1}+n}\le\theta_m,\nonumber\\&&\qquad
X_{T_{j_{m-1}+n}-0}-X_{T_{j_{m-1}}}>A-B+(n-1)(-a+\varepsilon)\bigr\}\nonumber\\
&=:& P_1+P_2.
\end{eqnarray}
Firstly,
\begin{eqnarray*}
P_1 &=& \P\bigl\{\overline{T_{j_{m-1}+n}>\tau},\ T_{j_{m-1}+n}\le\theta_m,\
X_{T_{j_{m-1}+n}-0}-X_{T_{j_{m-1}}}\le A-B+(n-1)(-a+\varepsilon)\bigr\}\\
&=& \frac{1}{1-p(B)}\P\bigl\{\overline{T_{j_{m-1}+n}>\tau},\
T_{j_{m-1}+n}\le\theta_m,\\
&&\hspace{15mm}  X_{T_{j_{m-1}+n}-0}-X_{T_{j_{m-1}}}\le A-B+(n-1)(-a+\varepsilon)\bigr\}\\
&&\hspace{15mm}
X_{T_{j_{m-1}+n+i-1}}-X_{T_{j_{m-1}+n}-0}\le B+i(-a+\varepsilon)\ \mbox{ for all }i\ge 1\bigr\}\\
&\le& \frac{1}{1-p(B)}\P\bigl\{\overline{T_{j_{m-1}+n}>\tau},\
X_{T_{j_{m-1}+n}}-X_{T_{j_{m-1}}}\le A+n(-a+\varepsilon)
\mbox{ for all }n\ge 1\bigr\}\\
&=& \frac{1}{1-p(B)}
\P\bigl\{T_{j_{m-1}+n}\le\tau,\ \theta_{m-1}<\infty,\ \theta_m=\infty\bigr\},
\end{eqnarray*}
where the second equality again follows from the independence of the future
increments \eqref{tau.n.crp} 
and from the definition of $p(B)$.
Secondly,
\begin{eqnarray*}
P_2 &\le& \P\bigl\{\theta_{m-1}<\infty,\
X_{T_{j_{m-1}+n}-0}-X_{T_{j_{m-1}}}\in (n-1)(-a+\varepsilon)+(A-B,A]\bigr\}\\
&=& \P\bigl\{\theta_{m-1}<\infty,\ S_{m,n-1}\in (A-B,A]\bigr\}\\
&=& p^{m-1}(A)\P\bigl\{S_{m,n-1}\in (A-B,A]\bigr\},
\end{eqnarray*}
where $S_{m,n}:=Y_{\theta_{m-1}+1}+\ldots+Y_{\theta_{m-1},n}+n(a-\varepsilon)$
is a random walk with negative drift $-\varepsilon$.
Hence, it follows from \eqref{theta.m.upper} and \eqref{T.T.P1.P2} that, for $m\le k$,
\begin{eqnarray*}
\lefteqn{\P\{X_{\theta_m}-X_{\theta_{m-1}}>x,\ \theta_k<\infty,\ \theta_k\le\tau\}}\\
&\le& \frac{p^{k-m}(A)}{1-p(B)} \sum_{n=1}^\infty
\P\{\tau\ge T_n,\ \theta_{m-1}<\infty,\ \theta_m=\infty\}\overline F(x+n(a-\varepsilon)-A-a)\\
&&+p^{k-1}(A)\sum_{n=1}^\infty \P\bigl\{S_{m,n-1}\in (A-B,A]\bigr\}
\overline F(x+n(a-\varepsilon)-A-a).
\end{eqnarray*}
By the Blackwell renewal theorem, for any fixed $B>0$,
\begin{eqnarray*}
\sum_{n=1}^\infty \P\bigl\{S_{1,n-1}\in (A-B,A]\bigr\}
&\to& 0\quad\mbox{as }A\to\infty.
\end{eqnarray*}
Thus, there exists a sufficiently large $A=A(B)$ such that
the last series is less than $\varepsilon$, hence by \eqref{p.A.to.0}
\begin{eqnarray}\label{upper.tau.crp.2.tails}
\lefteqn{\P\{X_{\theta_m}-X_{\theta_{m-1}}>x,\ \theta_k<\infty,\ \theta_k\le\tau\}}\nonumber\\
&\le& \frac{\varepsilon^{k-m}}{1-p(B)} \sum_{n=1}^\infty
\P\{\tau\ge T_n,\ \theta_{m-1}<\infty,\ \theta_m=\infty\}\overline F(x+n(a-\varepsilon)-A-a)\nonumber\\&&\qquad
+\varepsilon^k \overline F(x-A-a).
\end{eqnarray}

Next we have, for all Borel sets $B_1$, \ldots, $B_k\subseteq(A,\infty)$,
\begin{eqnarray*}
\lefteqn{\P\{X_{\theta_1}\in B_1,\ X_{\theta_2}-X_{\theta_1}\in B_2,
\ldots,X_{\theta_k}-X_{\theta_{k-1}}\in B_k,\ \theta_k<\infty,\ \theta_k\le\tau\}}\\
&\le& \P\{X_{\theta_1}\in B_1,\ \theta_1\le\tau,\ X_{\theta_2}-X_{\theta_1}\in B_2,
\ldots,X_{\theta_k}-X_{\theta_{k-1}}\in B_k,\ \theta_k<\infty\}\\
&=& \P\{X_{\theta_1\wedge\tau}\in B_1\} p^{k-1}
\prod_{m=2}^k \P\{X_{\theta_m}-X_{\theta_{m-1}}\in B_m\mid \theta_m<\infty\},
\end{eqnarray*}
again due to the independence of the future increments \eqref{tau.n.crp}.
Then, as follows from \eqref{eq:upper.1.crp},
\begin{eqnarray}\label{S.theta.1-}
&&\P\{X_{\theta_1}\in B_1,\ X_{\theta_2}{-}X_{\theta_1}\in B_2,
\ldots,X_{\theta_k}{-}X_{\theta_{k-1}}\in B_k,\ \theta_k<\infty,\ \theta_k\le\tau\}\nonumber\\
&& \qquad \le \overline G_\tau(B_1) p^{k-1} \prod_{m=2}^k G_\infty(B_m),
\end{eqnarray}
where the distribution
$G_\infty(B):=\P\{X_{\theta_1}\in B\mid \theta_1<\infty\}$ possesses the following
local upper bound, for some $c_\infty<\infty$,
\begin{eqnarray}\label{eq:G.infty.1}
G_\infty[y,y+1] &\le& c_\infty\overline F(y)\quad\mbox{for all }y>0,
\end{eqnarray}
owing to the condition $F\in\mathcal S^*$.

We can write down that
\begin{eqnarray*}
&&\P\{M_{\theta_k}>x,\ \theta_k<\infty,\ \theta_k\le\tau\}
\le \sum_{m=1}^k \P\{X_{\theta_m}-X_{\theta_{m-1}}>x,\ \theta_k<\infty,\ \theta_k\le\tau\}\\
&&\quad + \P\{M_{\theta_k}>x,\
X_{\theta_m}-X_{\theta_{m-1}}\le x\mbox{ for all }m\le k-1,\
\theta_k<\infty,\ \theta_k\le\tau\},
\end{eqnarray*}
which holds true even if the ladder heights are dependent given $\theta_k\le\tau$;
the latter is observed in most cases.
Since the events $\{\theta_k<\infty,\theta_{k+1}=\infty\}$, $k\ge 1$, make a partition
of the event $\{\theta_1<\infty\}$, by the total probability law,
we get the following upper bound
\begin{eqnarray}\label{upper.tau.s1.s2}
\P\{M_\tau>x\} &=& \sum_{k=1}^\infty
\P\{M_{\theta_k}>x,\ \theta_k<\infty,\ \theta_k\le\tau,\ \theta_{k+1}=\infty\}\nonumber\\
&\le& \sum_{k=1}^\infty \P\{M_{\theta_k}>x,\ \theta_k<\infty,\ \theta_k\le\tau\}\nonumber\\
&\le& \sum_{k=1}^\infty \sum_{m=1}^k
\P\{X_{\theta_m}-X_{\theta_{m-1}}>x,\ \theta_k<\infty,\ \theta_k\le\tau\}\nonumber\\
&&+ \sum_{k=2}^\infty\P\{M_{\theta_k}>x,\
X_{\theta_m}{-}X_{\theta_{m-1}}\le x\mbox{ for all }m\le k{-}1,\
\theta_k<\infty,\ \theta_k\le\tau\}\nonumber\\
&=:& \Sigma_1(x)+\Sigma_2(x).
\end{eqnarray}
It follows from \eqref{upper.tau.crp.2.tails} that
\begin{eqnarray}\label{eq:sigma1}
\Sigma_1(x)
&\le& \sum_{k=1}^\infty \sum_{m=1}^k \frac{\varepsilon^{k-m}}{1-p(B)}
\sum_{n=1}^\infty
\P\{\theta_{m-1}<\infty,\ \theta_m=\infty,\ \tau\ge T_n\}\overline F(x{-}A{-}a+n(a{-}\varepsilon))\nonumber\\&&\qquad
+\overline F(x-A-a)\sum_{k=1}^\infty \varepsilon^k \nonumber\\
&=& \frac{1}{(1{-}p(B))(1{-}\varepsilon)}
\sum_{n=1}^\infty \overline F(x{-}A{-}a{+}n(a{-}\varepsilon))
\sum_{m=1}^\infty
\P\{\theta_{m-1}{<}\infty,\ \theta_m{=}\infty,\ \tau\ge T_n\}\nonumber\\&&\qquad
+\overline F(x{-}A{-}a)\varepsilon/(1{-}\varepsilon) \nonumber\\
&=& \frac{1}{(1-p(B))(1-\varepsilon)}
\sum_{n=1}^\infty \overline F(x-A-a+n(a-\varepsilon))\P\{\tau\ge T_n\}
+\overline F(x)\frac{\varepsilon}{1-\varepsilon} \nonumber\\
&& = \quad \frac{1}{(1-p(B))(1-\varepsilon)}
\overline G_\tau(x)+\overline F(x-A-a)\frac{\varepsilon}{1-\varepsilon}
\quad\mbox{for all sufficiently large }x.
\end{eqnarray}
To bound the second term in \eqref{upper.tau.s1.s2}, we firstly notice that
\begin{eqnarray*}
M_{\theta_k} &\le& A+X_{\theta_1}^++(X_{\theta_2}-X_{\theta_1})^+
+\ldots+(X_{\theta_k}-X_{\theta_{k-1}})^+.
\end{eqnarray*}
Then let us apply \eqref{S.theta.1-} and \eqref{eq:G.tau.mu}:
\begin{eqnarray*}
\Sigma_2(x) &\le&
\sum_{k=2}^\infty \varepsilon^{k-1} \P\{M_{\theta_k}>x,\ \theta_1\le\tau,\
X_{\theta_m}{-}X_{\theta_{m-1}}\le x\mbox{ for all }m\le k\mid \theta_k<\infty\}\\
&\le& \sum_{k=2}^\infty \varepsilon^{k-1} \int_{D_k(x)}
G_\tau(dy_1)G_\infty(dy_2)\ldots G_\infty(dy_k)\\
&=& \sum_{k=2}^\infty \varepsilon^{k-1} \int_0^\infty \mu_\tau(dy)\int_{D_k(x)}
F(y+dy_1)G_\infty(dy_2)\ldots G_\infty(dy_k),
\end{eqnarray*}
where $D_k(x):=\{(y_1,\ldots,y_k)\in[0,x]^k:y_1+\ldots+y_k>x\}$.
Therefore, by the local bound \eqref{eq:G.infty.1} for the distribution $G_\infty$,
for some $c_1<\infty$,
\begin{eqnarray*}
\Sigma_2(x) &\le& c_1\sum_{k=2}^\infty (\varepsilon c_\infty)^{k-1}
\int_0^\infty \mu_\tau(dy)\int_{D_k(x)}
F(y+dy_1)\overline F(y_2)dy_2\ldots\overline F(y_k)dy_k\\
&=& c_1\E Y^+\sum_{k=2}^\infty (\varepsilon c_\infty \E Y^+)^{k-1}
\int_0^\infty \mu_\tau(dy)\int_{D_k(x)}
F^+(y{+}dy_1)\overline{F^+}(y_2)dy_2\ldots\overline{F^+}(y_k)dy_k,
\end{eqnarray*}
where $F^+(dz):=F(dz)/\E Y^+$ is a distribution on $\R^+$.
Integration against $y_1$ leads to an upper bound
\begin{eqnarray*}
\Sigma_2(x)
&\le& c_2 \sum_{k=2}^\infty (\varepsilon c_\infty \E Y^+)^{k-1}
\int_0^\infty \mu_\tau(dy)
\int_{[0,x]^{k-1}}
\overline{F^+}(x+y-y_2-\ldots-y_k)\nonumber\\&&\qquad\overline{F^+}(y_2)dy_2\ldots\overline{F^+}(y_k)dy_k.
\end{eqnarray*}
The function $\overline {F^+}(x)=\overline F(x)/\E Y^+$, $x>0$, is a subexponential density
function itself due to $F\in\mathcal S^*$, so we can apply Kesten's bound for
subexponential densities, see Foss et al.\ \cite[Theorem 4.11]{FKZ},
which guarantees that there exists an $C<\infty$ such that
\begin{eqnarray*}
\int_{[0,x]^{k-1}}
\overline{F^+}(x+y-y_2-\ldots-y_k)\overline{F^+}(y_2)dy_2\ldots\overline{F^+}(y_k)dy_k
&\le& C 2^k\overline{F^+}(x+y)
\end{eqnarray*}
for all $k\ge 2$ and $x$, $y>0$. Hence, for all $k\ge 2$,
\begin{eqnarray*}
\int_0^\infty \mu_\tau(y) \int_{D_k(x)}
F(y+dy_1)\overline F(y_2)dy_2\ldots\overline F(y_k)dy_k
&\le& C_2 2^k\overline G_\tau(x)
\end{eqnarray*}
for all sufficiently large $x$. In its turn it yields that
$\Sigma_2(x) \le \gamma\varepsilon\overline G_\tau(x)$, $\gamma<\infty$,
provided $2\varepsilon c_\infty<1$,
which being substituted together with \eqref{eq:sigma1} into \eqref{upper.tau.s1.s2}
implies the following upper bound
\begin{eqnarray*}
\P\{M_\tau>x\}
&\le&  \frac{1}{(1-p(B))(1-\varepsilon)}\overline G_\tau(x)
+\overline F(x-A-a)\frac{\varepsilon}{1-\varepsilon}+\gamma\varepsilon\overline G_\tau(x).
\end{eqnarray*}
Since we can choose $\varepsilon>0$ as small as we please
and $B$ as large as we please,
\begin{eqnarray*}
\P\{M_\tau>x\} &\le&  (1+o(1))\overline G_\tau(x)+o(\overline F(x))\quad\mbox{as }x\to\infty,
\end{eqnarray*}
uniformly for all $\tau\ge T_1$.
Since $\P\{\tau\ge T_1\}=1$, the last term can be incorporated into $\overline G_\tau(x)$,
which concludes the proof of Proposition \ref{l:stopping.+3.crp}
in the case $c\le 0$, owing to the asymptotics \eqref{eq:upper.4.crp} for $\overline G_\tau(x)$.


Now proceed with the proof in the case $c>0$ under the condition \eqref{cond.o}.
We need to slightly modify the jumps,
because in the case $c>0$ it is not any longer true that the maximum can be only attained
at a jump epoch $T_n$, instead it can be attained just prior to that, at time epoch $T_n-0$.
However there exists a sufficiently large $z>0$ such that the truncated jumps $\widetilde Y_k:=\max(Y_k,-z)$
satisfy $\E\widetilde Y_k\le \E Y_k+\varepsilon$. Consider a modified process $\widetilde X$
with jumps $\widetilde Y_k$ instead of $Y_k$ dominates the original process $X$,
it is sufficient to prove a matching upper bound for a process with jumps
bounded below by, say, $-z$.

As in the case $c\le 0$, we again consider the random time
$\widetilde\tau:=\max\{T_n:\ T_n\le\tau\}$ which
does not depend on the future increments of $X$;
let $\nu$ be such that $\widetilde\tau=T_\nu$. For that reason, due to $c>0$,
\begin{eqnarray*}
M_\tau &\le& M_{\widetilde\tau}+c(\tau-\widetilde\tau)\ \le\
M_{\widetilde\tau}+c(T_{\nu+1}-T_\nu)\ =\
M_{\widetilde\tau}+c\eta
\end{eqnarray*}
where conditioning on $\widetilde\tau$ and Example \ref{example3.1.1} imply that $\eta$
is an independent random variable distributed as $T_1$
and the tail distribution of $c\eta$ is negligible compared to that of $F$,
by the condition \eqref{cond.o}. Notice that then
\begin{eqnarray}\label{remainder.little.o}
\lefteqn{\P\{c\eta>x\}+\int_0^x \P\{c\eta\in du\}\E\int_{x-u}^{x-u+aN_\tau}\overline F(y)dy}
\nonumber\\
&=& \P\{cT_1>x\}+\E\int_0^{aN_\tau} dy\int_0^x\P\{c\eta\in du\}\overline F(x+y-u)
\nonumber\\
&\le& \P\{cT_1>x\}+\E\int_0^{aN_\tau} dy\int_0^{x+y}\P\{c\eta\in du\}\overline F(x+y-u)
\nonumber\\
&=& \P\{cT_1>x\}+(1+o(1))\E\int_0^{aN_\tau} \overline F(x+y)dy\quad\mbox{as }x\to\infty,
\end{eqnarray}
uniformly for all $\tau$ because, due to \cite[Corollary 3.18]{FKZ},
\begin{eqnarray*}
\int_0^{x+y}\P\{c\eta\in du\}\overline F(x+y-u) &\sim&
\overline F(x+y)\quad\mbox{as }x\to\infty\mbox{ uniformly for all }y\ge 0.
\end{eqnarray*}
In addition, $N_\tau=N_{\widetilde\tau}$.
Therefore, without loss of generality it is enough to consider the case where
$\tau$ only takes values from $\{T_n,\ n\ge 0\}$.
Then, since the jumps are assumed to be bounded below by $-z$,
\begin{eqnarray*}
M_\tau &=& \max_{T_n\le\tau}\{M_{T_n},\ M_{T_n-0}\}\ \le\ \max_{T_n\le\tau}M_{T_n}+z.
\end{eqnarray*}
Therefore, by Lemma \ref{l:conditioning}, it suffices to consider the case where $\tau\ge T_1$.
This allows us to conclude the proof of Proposition \ref{l:stopping.+3.crp} in the case $c>0$
along the lines of the earlier proof  in the case $c\le 0$.
\end{proof}

In this section we have repeatedly made use of the following fact which is proved in Appendix.
Let $\P\{\tau>0\}>0$. Consider a new probability measure
$\widehat P\{B\}=\P\{B,\tau>0\}/\P\{\tau>0\}$.
Let $\widehat X$ be $X$ under the measure $\widehat P$,
and similar let $\widehat\tau$ be $\tau$ under the measure $\widehat P$.

\begin{Lemma}\label{l:conditioning}
Let $\tau$ be independent of the future increments of $X$ in the sense of Definition \ref{indinc}.
Then $\widehat X$ is stochastically equivalent to $X$,
$\widehat\tau$ does not depend on the future increments of $\widehat X$, and
\begin{eqnarray*}
\P\{M_\tau>x\} &=&
\P\{\tau>0\}\P\{\widehat M_{\widehat\tau}>x\}\quad\mbox{for all }x>0.
\end{eqnarray*}
Let $\tau$ only take values from $\{T_n\}$ and let $\tau$ be independent
of the future increments of $X$ in the sense of Definition \ref{defindfuture}.
Then $\widehat X$ is stochastically equivalent to $X$,
$\widehat\tau$ does not depend on the future increments of $\widehat X$
in the same sense, and
\begin{eqnarray*}
\P\{M_\tau>x\} &=&
\P\{\tau\ge T_1\}\P\{\widehat M_{\widehat\tau}>x\}\quad\mbox{for all }x>0.
\end{eqnarray*}
\end{Lemma}

\section{Application to compound Poisson process with linear component}
\label{taudep.cPp}

For a {\it compound Poisson process with linear component} $X$ where
$N=\{N_t, t\ge 0\}$ is a homogeneous Poisson process with intensity
of jumps $\lambda$, we have $\E N_t=t\lambda$
and $\P\{cT_1>x\}=e^{-\lambda x/c}=o(\overline F(x))$
provided $F$ is heavy-tailed. The following result holds.

\begin{Theorem}\label{thm:Poisson}
Let $X$ be a compound Poisson process with linear component.
If $\E(c/\lambda+Y_1) =: -a<0$ and the distribution $F$ of $Y_1^+$
is strong subexponential, then
\begin{eqnarray*}
\P \{M_\tau>x\} &=& \frac{1+o(1)}{a} \E\int_x^{x+aN_\tau}\overline F(v)dv
+O(e^{-\lambda x/c})\\
&=& \frac{1+o(1)}{a} \E \int_x^{x+a\lambda\tau}\overline F(v)dv+O(e^{-\lambda x/c})\\
&=& \frac{1+o(1)}{|\E X_1|} {\E}
 \int_x^{x+|\E X_1|\tau}\P\{X_1>v\}dv+O(e^{-\lambda x/c})\quad\mbox{as }x\to\infty,
\end{eqnarray*}
uniformly for all random times $\tau\in[0,\infty]$
that do not depend on the future increments of $X$.
\end{Theorem}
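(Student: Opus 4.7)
First, for the leftmost equality I would apply Theorem \ref{th:stopping.1.crp}. Since in the Poisson setting $T_1$ is exponential with rate $\lambda$, for $c>0$ one has $\P\{cT_1>x\}=e^{-\lambda x/c}$, which is $o(\overline F(x))$ because $F\in{\mathcal S}^*$ is heavy-tailed; when $c\le 0$ the condition \eqref{cond.o} is not required. By Example \ref{example1}, any $\tau$ that does not depend on the future increments of $X$ automatically satisfies the embedded-epoch condition \eqref{tau.n.crp} at the Poisson jump epochs $T_n$, so Theorem \ref{th:stopping.1.crp} directly yields the first equality.

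Second, to pass from $N_\tau$ to $\lambda\tau$, I would write $\overline F_I(y):=\int_y^\infty\overline F(u)du$ and observe that
\begin{equation*}
\E\int_x^{x+aN_\tau}\overline F(v)dv-\E\int_x^{x+a\lambda\tau}\overline F(v)dv=\E\overline F_I(x+a\lambda\tau)-\E\overline F_I(x+aN_\tau).
\end{equation*}
Since $\tau$ is independent of the future of $N$, Wald-type identities give $\E N_\tau=\lambda\E\tau$ and $\E(N_\tau-\lambda\tau)^2=\lambda\E\tau$. With $h(x)\uparrow\infty$ chosen as in \eqref{F.h.x} so that $\overline F(x+h(x))\sim\overline F(x)$, I would split the expectation on the event $\{|N_\tau-\lambda\tau|\le h(x)\}$: on the bulk, the long-tailedness of $\overline F_I$ together with the vanishing of the first-order term ($\E(N_\tau-\lambda\tau)=0$) confines the contribution to $o\bigl(\E\int_x^{x+a\lambda\tau}\overline F(v)dv\bigr)$, and on the complement a Chernoff estimate for Poisson deviations controls it by $o(\overline F(x))$. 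Combining the two regimes gives the second equality uniformly in $\tau$. The case $\E\tau=\infty$ reduces by monotone truncation to $\tau=\infty$, where both integrals collapse to $\overline F_I(x)$.

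Third, for the rightmost equality I would use that $X_1=\sum_{i=1}^{N_1}Y_i+c$ is compound Poisson with $\E N_1=\lambda$ and i.i.d.\ ${\mathcal S}^*$ summands, so the classical random-sum tail asymptotic yields $\P\{X_1>v\}\sim\lambda\overline F(v)$ as $v\to\infty$. Together with $\E X_1=c+\lambda b=\lambda(c/\lambda+b)=-a\lambda$, hence $|\E X_1|=a\lambda$, substitution and uniform long-tailedness of $\overline F$ on the integration range produce
$$\frac{1}{|\E X_1|}\E\int_x^{x+|\E X_1|\tau}\P\{X_1>v\}dv=\frac{1+o(1)}{a}\E\int_x^{x+a\lambda\tau}\overline F(v)dv+o(\overline F(x)),$$
matching the middle expression.

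The hard part is the second step: the uniformity in $\tau$ of the replacement $N_\tau\leadsto\lambda\tau$ requires a quantitative comparison between the $\sqrt{\E\tau}$-scale fluctuations of $N_\tau-\lambda\tau$ and the rate at which the long-tailed function $\overline F_I$ varies at infinity, exploiting the additive $o(\overline F(x))$ slack in the statement to absorb the relative correction in both the regime where $\E\int_x^{x+a\lambda\tau}\overline F(v)dv$ dominates $\overline F(x)$ and the regime where it does not. Steps one and three are routine once Theorem \ref{th:stopping.1.crp} and the standard subexponential random-sum asymptotic are in hand.
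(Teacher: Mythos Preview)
Your treatment of the first and third equalities is correct and matches the paper: the first is Theorem~\ref{th:stopping.1.crp} combined with Example~\ref{example1} and the exponential tail of $T_1$, and the third follows from $\P\{X_1>v\}\sim\lambda\overline F(v)$ together with $|\E X_1|=a\lambda$.

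The sketch for the second equality, however, contains a real gap. Your proposed split on $\{|N_\tau-\lambda\tau|\le h(x)\}$ with a Chernoff bound on the complement cannot deliver the required \emph{uniformity} in $\tau$. Conditionally on $\tau=t$, the fluctuation $N_t-\lambda t$ is of order $\sqrt{\lambda t}$, so once $t$ is of order $h(x)^2$ or larger the complement $\{|N_t-\lambda t|>h(x)\}$ has probability bounded away from~$0$; and the statement must hold for \emph{all} admissible $\tau$, including those placing mass well beyond $h(x)^2$. On such $\tau$ the complement contribution is of order $\overline F_I(x)$, which is not $o(\overline F(x))$ and need not be $o\bigl(\E\int_x^{x+a\lambda\tau}\overline F\bigr)$ either. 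The first--order cancellation you invoke on the bulk is also not available after the split: $\E[N_\tau-\lambda\tau]=0$ holds on the full sample space, but conditioning on $\{|N_\tau-\lambda\tau|\le h(x)\}$ biases this away from zero by exactly the complement contribution. Finally, ``$\E\tau=\infty$ reduces by monotone truncation to $\tau=\infty$'' is not right: $\E\tau=\infty$ does not mean $\tau=\infty$ a.s., and truncating to $\tau\wedge n$ only returns you to the finite--mean case whose uniform version is precisely what is in question.

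The paper proceeds by a different decomposition that sidesteps all of this. Fixing a large deterministic time $T$, it writes
\[
\E\int_x^{x+aN_\tau}\overline F
=\E\int_x^{x+aN_{\tau\wedge T}}\overline F
+\E\Bigl[\int_{x+aN_T}^{x+aN_\tau}\overline F;\ \tau>T\Bigr]
=:E_1+E_2.
\]
For $E_1$ (bounded horizon $\tau\wedge T\le T$) it proves directly, via the Wald--type moment identities $\E N_\tau=\lambda\E\tau$ and $\E(N_\tau-\lambda\tau)^2=\lambda\E\tau$ of Lemma~\ref{l:E.tau.f}, that both $\E\int_x^{x+aN_{\tau\wedge T}}\overline F$ and $\E\int_x^{x+a\lambda(\tau\wedge T)}\overline F$ are $\sim a\lambda\,\E(\tau\wedge T)\,\overline F(x)$ (Lemma~\ref{l:stopping.T}). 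For $E_2$ it uses that $\{N_s-N_T:s\ge T\}$ is independent of $\{\tau>T\}$ and the pathwise bound $\E\sup_{s\ge 0}\{N_s-(\lambda+\varepsilon)s\}<\infty$ (Lemma~\ref{l:E.length.U+.eps}) to obtain
\[
E_2\ \le\ (1+\varepsilon/\lambda)\,\E\Bigl[\int_x^{x+a\lambda\tau}\overline F;\ \tau>T\Bigr]+c(\varepsilon)\,\P\{\tau>T\}\,\overline F(x),
\]
with $c(\varepsilon)$ independent of both $\tau$ and $T$; the term $\P\{\tau>T\}\overline F(x+a\lambda T)$ is then reabsorbed into $\E\int_x^{x+a\lambda\tau}\overline F$. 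Letting $T\to\infty$ and then $\varepsilon\downarrow 0$ gives the upper bound, and a symmetric argument gives the lower. The point is that the control on $\{\tau>T\}$ comes from a \emph{supremum} bound on the centered Poisson process, which is uniform over all $\tau$ by construction, rather than from a deviation bound on $N_\tau-\lambda\tau$ that necessarily degrades as the distribution of $\tau$ spreads out.
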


\begin{proof}
The first equivalence in the theorem follows directly from  the discussion on the condition \eqref{tau.n.crp} in Introduction and from Theorem \ref{th:stopping.1.crp}.
We prove the second equivalence now, doing so in several steps.

We start from the following analogue of the Wald--Kolmogorov--Prokhorov identity
(see \cite{KP}) that holds for L\'evy processes, which has independent own interest.

\begin{Lemma}\label{l:E.tau.f}
Let $X$ be a L\'evy process with finite drift $m:=\E X_1$, and let $\tau$
be a random time that does not depend on the future increments of $X$.
If $\E\tau<\infty$ then $\E X_\tau = m\E\tau$.

In addition, if $\sigma^2={\mathbb Var} X_1<\infty$, then
$\E(X_\tau-m\tau)^2 = \sigma^2\E\tau$.
\end{Lemma}

Based on Lemma \ref{l:E.tau.f}, one can prove the following asymptotic equivalence.

\begin{Lemma}\label{l:stopping.T}
Let $T>0$ and $\mathcal T_T=\{\tau:\tau\le T\}$ be a family of random
times that do not depend on the future increments of $N$. If $F$ is long-tailed then
\begin{eqnarray*}
\E\int_0^{aN_\tau} \overline F(x+y)dy\ \sim\
\E\int_0^{a\lambda\tau} \overline F(x+y)dy &\sim& a\lambda\E\tau \overline F(x)
\end{eqnarray*}
as $x\to\infty$ uniformly for all $\tau\in\mathcal T_T$.
\end{Lemma}
The proofs of Lemmas \ref{l:E.tau.f} and
\ref{l:stopping.T} are given in Appendix.

Then the following result is straightforward.

\begin{Lemma}\label{l:E.length.U+.eps}
Let $X$ be a L\'evy process with finite drift $m:=\E X_1$ and diffusion coefficient
$\sigma^2={\mathbb Var} X_1$. Then, for any fixed $\varepsilon>0$,
$\sup_\tau \E(X_\tau-(m+\varepsilon)\tau) <\infty$.
\end{Lemma}

\begin{proof}
The L\'evy process $Y_t=X_t-(m+\varepsilon)t$, $t\ge 0$ is negatively driven and its jumps are
square integrable, so
$\E \sup_{t>0} Y_t <\infty$, hence the result follows.
\end{proof}

We are ready to prove  the second equivalence in the statement of Theorem \ref{thm:Poisson}.
For a general $\tau$, fix an $\varepsilon>0$ and a $T<\infty$, and consider
the following decomposition and the upper bound
\begin{eqnarray}\label{E.Pois.deco}
\E\int_x^{x+aN_\tau}\overline F(v)dv &=&
\E\int_x^{x+aN_{\tau\wedge T}}\overline F(v)dv
+\E\Bigl\{\int_{x+aN_T}^{x+aN_\tau}\overline F(v)dv;\ \tau>T\Bigr\}\nonumber\\
&\le& \E\int_x^{x+aN_{\tau\wedge T}}\overline F(v)dv
+\E\Bigl\{\int_x^{x+a(N_\tau-N_T)}\overline F(v)dv;\ \tau>T\Bigr\}
\nonumber\\
&=& \E\int_x^{x+aN_{\tau\wedge T}}\overline F(v)dv
+\E \int_x^{x+a(N_\tau-N_T)^+}\overline F(v)dv
\ =:\ E_1+E_2.
\end{eqnarray}
Fix an $\varepsilon>0$. Then
\begin{eqnarray*}
E_2 &\le&
\E\int_x^{x+a(\lambda+\varepsilon)(\tau-T)^+}\overline F(v)dv
+\overline F(x)a\E[(N_\tau-N_T)^+-(\lambda+\varepsilon)(\tau-T)^+]^+\\
&\le&
\E\int_x^{x+a(\lambda+\varepsilon)(\tau-T)^+}\overline F(v)dv
+\overline F(x)a\E\Bigl\{\sup_{s\ge 0}\{(N_{T+s})-N_T)-(\lambda+\varepsilon)s\}; \tau>T\Bigr\}.
\end{eqnarray*}
We recall that the process $\widehat N_s=N_{T+s}-N_T$, $s\ge 0$,
is independent of the event $\tau>T$. Hence by Lemma \ref{l:E.length.U+.eps},
\begin{eqnarray*}
E_2 &\le&
\E\int_x^{x+a(\lambda+\varepsilon)(\tau-T)^+}\overline F(v)dv
+c(\varepsilon)\P\{\tau>T\}\overline F(x),
\end{eqnarray*}
where the constant $c(\varepsilon)$ depends on neither $\tau$ nor $T$. Further,
\begin{eqnarray*}
\E\int_x^{x+a(\lambda+\varepsilon)(\tau-T)^+}\overline F(v)dv
&\le& \frac{\lambda+\varepsilon}{\lambda}\E\int_x^{x+a\lambda(\tau-T)^+}\overline F(v)dv,
\end{eqnarray*}
because the tail function $\overline F(x)$ is decreasing. Hence,
\begin{eqnarray*}
E_2 &\le& (1+\varepsilon/\lambda)\E\Bigl\{\int_x^{x+a\lambda(\tau-T)}\overline F(v)dv;\
\tau>T\Bigr\}
+\P\{\tau>T\}c(\varepsilon)\overline F(x)\\
&\le& (1+\varepsilon/\lambda)\E\Bigl\{\int_x^{x+a\lambda\tau}\overline F(v)dv;\ \tau>T\Bigr\}
+\P\{\tau>T\}c(\varepsilon)\overline F(x).
\end{eqnarray*}
Therefore, due to the long-tailedness of $F$,
there exists an $\widehat x=\widehat x(T)$ such that, for $x\ge\widehat x$,
\begin{eqnarray*}
E_2 &\le& (1+\varepsilon/\lambda)\E\Bigl\{\int_x^{x+a\lambda\tau}
\overline F(v)dv;\ \tau>T\Bigr\}
+\P\{\tau>T\}2c(\varepsilon)\overline F(x+a\lambda T).
\end{eqnarray*}
By Lemma \ref{l:stopping.T}, for all $\tau$ and for all sufficiently large $x$,
\begin{eqnarray*}
E_1 &\le& (1+\varepsilon)\E\int_x^{x+a\lambda(\tau\wedge T)} \overline F(y)dy\\
&=& (1+\varepsilon)\E\Bigl\{\int_x^{x+a\lambda\tau} \overline F(y)dy;\ \tau\le T\Bigr\}
+(1+\varepsilon)\P\{\tau>T\}\overline F(x+a\lambda T).
\end{eqnarray*}
Thus, for all $\tau$ and sufficiently large $x$,
\begin{eqnarray*}
E_1+E_2 &\le& (1+\widehat\varepsilon)\E\int_x^{x+a\lambda\tau}\overline F(v)dv
+\P\{\tau>T\}\widehat c\overline F(x+a\lambda T),
\end{eqnarray*}
where $\widehat\varepsilon=\varepsilon\max(1,1/\lambda)$ and
$\widehat c=1+\varepsilon +2c(\varepsilon)$. Since
\begin{eqnarray*}
\P\{\tau>T\}\overline F(x+a\lambda T) &\le& \frac{1}{a\lambda T}
\E\Bigl\{\int_x^{x+a\lambda\tau}\overline F(v)dv;\ \tau>T\Bigr\},
\end{eqnarray*}
we conclude an upper bound
\begin{eqnarray*}
E_1+E_2 &\le& (1+\widehat\varepsilon+\widehat c/a\lambda T)
\E\int_x^{x+a\lambda\tau}\overline F(v)dv.
\end{eqnarray*}
Firstly letting $T\to\infty$ and then $\varepsilon\downarrow 0$,  we derive from
\eqref{E.Pois.deco} that
\begin{eqnarray*}
\E\int_x^{x+aN_\tau}\overline F(v)dv &\le&
(1+o(1)) \E\int_x^{x+a\lambda\tau}\overline F(v)dv
\end{eqnarray*}
as $x\to\infty$ uniformly for all $\tau$ that do not depend on the future
increments of $N_t$.

To get a matching lower bound we start with the inequality
\begin{eqnarray*}
\E\int_x^{x+aN_\tau}\overline F(v)dv &\ge&
\E\int_x^{x+aN_{\tau\wedge T}}\overline F(v)dv
+\E\Bigl\{\int_{x+aN_T}^{x+aN_\tau}\overline F(v)dv;\
\tau>T,N_T\le(\lambda-\varepsilon)T\Bigr\};
\end{eqnarray*}
further arguments are quite similar to that used for the analysis
of the right hand side terms in the upper bound \eqref{E.Pois.deco}.
\end{proof}

\section{Proof of Theorem  \ref{th:stopping.1.lp} for L\'evy process}
\label{taudep.lp}

Given the distribution of $X_1$ is infinitely divisible,
recall the L\'evy--Khintchine formula for its characteristic exponent
$\Psi(\theta):=\log\E e^{i\theta X_1}$, for all $\theta\in\R$,
\begin{eqnarray*}
\Psi(\theta) &=&
\Bigl(i\alpha\theta-\frac{1}{2}\sigma^2\theta^2\Bigr)
+\int_{0<|x|<1}(e^{i\theta x}{-}1{-}i\theta x)\Pi(dx)
+\int_{|x|\ge 1}(e^{i\theta x}{-}1)\Pi(dx)\\
&=:& \Psi_1(\theta)+\Psi_2(\theta)+\Psi_3(\theta);
\end{eqnarray*}
see, e.g. Kyprianou \cite[Sect. 2.1]{Kyprianou}.
Here $\Pi$ is the L\'evy measure concentrated on $\R\setminus\{0\}$
and satisfying $\int_\R(1\wedge x^2)\Pi(dx)<\infty$.
Let $X^{(1)}$, $X^{(2)}$ and $X^{(3)}$ be independent
processes given in the L\'evy--It\^o decomposition
$X_t\stackrel{d}{=} X^{(1)}_t+X^{(2)}_t+X^{(3)}_t$, $t\ge 0$, where $X^{(1)}$ is
a zero-drifted Brownian motion with characteristic exponent given by $\Psi^{(1)}$,
$X^{(2)}$ a square integrable martingale
with an almost surely countable number of jumps on each finite time
interval which are of magnitude less than unity and with
characteristic exponent given by $\Psi^{(2)}$,
and $X^{(3)}$ a compound Poisson process with linear component $-mt$, with intensity
$\lambda:=\Pi(\R\setminus(-1,1))$,
and with jump distribution $F(dx)=\Pi(dx)/\lambda$ concentrated on $\R\setminus(-1,1)$.
It is known---see, e.g. Kyprianou \cite[Theorem 3.6]{Kyprianou}
or Sato \cite[Theorem 25.17]{Sato}---that the process
$Z_t:=X^{(1)}_t+X^{(2)}_t$, $t\ge 0$ possesses all exponential moments
finite which allows us to show the following result,
see e.g. Corollary 8 in Korshunov \cite{K2018}.

\begin{Proposition}\label{cor:X.via.Pi}
{\rm(i)} The distribution of $X_1$ is long-tailed if and only if
the distribution of $X^{(3)}_1$ is so. In both cases,
$\P\{X_1>x\}\sim\P\{X^{(3)}_1>x\}$ as $x\to\infty$.

{\rm(ii)} The distribution of $X_1$ is strong subexponential
if and only if the distribution $F$ is so. In both cases,
$\P\{X_1>x\}\sim\Pi(x,\infty)$ as $x\to\infty$.
\end{Proposition}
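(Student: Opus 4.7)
The plan is to build on the L\'evy--It\^o decomposition $X_1\stackrel{d}{=}Z_1+X^{(3)}_1$ given in the excerpt, where the two summands are independent, $Z_1=X^{(1)}_1+X^{(2)}_1$ has all exponential moments finite, and $X^{(3)}_1$ has the same law as a compound Poisson sum $\sum_{i=1}^{N_1}Y_i$ with $N_1\sim\mathrm{Poisson}(\lambda)$ and $Y_i$ i.i.d.\ with distribution $F$. Throughout, the guiding principle is that convolution with an independent light-tailed summand preserves both long-tailedness and strong subexponentiality together with the first-order tail asymptotics; this reduces the proposition to two well-separated claims.

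For part (i), first assume that $X^{(3)}_1$ is long-tailed and pick, via \cite[Lemma 2.19]{FKZ}, a function $h(x)\uparrow\infty$ such that $\P\{X^{(3)}_1>x\pm h(x)\}\sim\P\{X^{(3)}_1>x\}$. Since $Z_1$ has all exponential moments, by slowing the growth of $h$ if necessary one arranges $\P\{|Z_1|>h(x)\}=o(\P\{X^{(3)}_1>x\})$. Splitting on whether $|Z_1|\le h(x)$ and using independence produces
\begin{equation*}
\P\{X^{(3)}_1>x+h(x)\}(1-o(1))\ \le\ \P\{X_1>x\}\ \le\ \P\{X^{(3)}_1>x-h(x)\}+o(\P\{X^{(3)}_1>x\}),
\end{equation*}
from which $\P\{X_1>x\}\sim\P\{X^{(3)}_1>x\}$, and long-tailedness of $X_1$ is inherited. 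The converse is symmetric: the identity $X^{(3)}_1\stackrel{d}{=}X_1+(-Z_1)$ is again a convolution with an independent summand whose two-sided tail is lighter than every long tail, so the same squeeze argument applies in reverse.

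For part (ii), part (i) reduces matters to showing $X^{(3)}_1\in\mathcal{S}^*$ iff $F\in\mathcal{S}^*$, and $\P\{X^{(3)}_1>x\}\sim\lambda\overline F(x)=\Pi(x,\infty)$. If $F\in\mathcal{S}^*\subset\mathcal{S}$, then Kesten's bound (compare Lemma~\ref{l:kesten}) provides $\overline{F^{*n}}(x)\le C(\delta)(1+\delta)^n\overline F(x)$ uniformly in $n$, legitimising termwise asymptotics in
\begin{equation*}
\P\{X^{(3)}_1>x\}\ =\ \sum_{n=1}^\infty e^{-\lambda}\frac{\lambda^n}{n!}\overline{F^{*n}}(x)\ \sim\ \overline F(x)\sum_{n=1}^\infty e^{-\lambda}\frac{\lambda^n}{n!}\,n\ =\ \lambda\overline F(x),
\end{equation*}
by dominated convergence together with the subexponential identity $\overline{F^{*n}}(x)\sim n\overline F(x)$. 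Membership of $X^{(3)}_1$ in $\mathcal{S}^*$ then follows from the closure properties of the class (random sums with strong subexponential jump distribution and light-tailed counting variable). Conversely, if $X^{(3)}_1\in\mathcal{S}^*$, conditioning on $\{N_1=1\}$ and applying the same light-tailed perturbation argument to extract $Y_1$ recovers the asymptotic $\P\{X^{(3)}_1>x\}\sim\lambda\overline F(x)$ and the required membership of $F$ in $\mathcal{S}^*$. Combining with part (i) yields $\P\{X_1>x\}\sim\Pi(x,\infty)$.

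The principal technical obstacle is controlling the infinite compound Poisson sum uniformly in $n$, which is exactly the role of Kesten's inequality; without that bound, the termwise passage to the limit is not justified. A secondary subtlety is checking that the \emph{integral} condition defining $\mathcal{S}^*$, rather than merely $\overline{F*F}\sim2\overline F$, is transferred both under convolution with the light-tailed $Z_1$ and under the Poisson mixture. This is standard within the Foss--Korshunov--Zachary framework once the tail equivalence is established, but it is the step where one must resist the temptation to treat $\mathcal{S}^*$ as if it were a black-box closure property.
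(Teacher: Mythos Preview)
The paper does not supply a proof of this proposition at all: it is quoted as a known fact with the pointer ``see e.g.\ Corollary~8 in Korshunov \cite{K2018}''. So there is no in-paper argument to compare against; your sketch is an attempt to reconstruct a proof from scratch.

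Your forward directions are the standard ones and are fine. In part~(i), the squeeze via $|Z_1|\le h(x)$ with $h$ chosen so that both $\overline F$-insensitivity and $\P\{|Z_1|>h(x)\}=o(\cdot)$ hold is exactly the textbook device, and the reverse identity $X^{(3)}_1\stackrel{d}{=}X_1+(-Z_1)$ handles the converse symmetrically. In part~(ii), the use of Kesten's bound to justify dominated convergence in the Poisson mixture and conclude $\P\{X^{(3)}_1>x\}\sim\lambda\overline F(x)$ when $F\in\mathcal S^*$ is also standard.

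The gap is in your converse of part~(ii). Writing ``conditioning on $\{N_1=1\}$ and applying the same light-tailed perturbation argument'' does not do what you claim. On $\{N_1=1\}$ you indeed see $Y_1$, but the complementary piece $\P\{X^{(3)}_1>x,\ N_1\ge 2\}=e^{-\lambda}\sum_{n\ge 2}\frac{\lambda^n}{n!}\overline{F^{*n}}(x)$ is \emph{not} a light-tailed perturbation: it is built from the same heavy-tailed $F$ whose membership in $\mathcal S^*$ is precisely what you are trying to establish, so you cannot invoke Kesten's inequality here (that bound presupposes $F\in\mathcal S^*$). Nothing in your argument rules out, a priori, that this remainder is of the same order as $\overline F(x)$ or larger, so neither the asymptotic $\P\{X^{(3)}_1>x\}\sim\lambda\overline F(x)$ nor $F\in\mathcal S^*$ follows from the step as written. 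The usual route for this implication goes through the Embrechts--Goldie--Veraverbeke circle of results for infinitely divisible laws (subexponentiality of an infinitely divisible distribution forces subexponentiality of the normalised L\'evy tail, with $\P\{X_1>x\}\sim\Pi(x,\infty)$), after which tail equivalence transfers the $\mathcal S^*$ property to $F$; alternatively one can argue via the exponential-tilting/factorisation identities for compound Poisson laws. Either way, the converse needs a genuine extra ingredient beyond ``light-tailed perturbation''.
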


\begin{proof}[Proof of Theorem \ref{th:stopping.1.lp}]
Since $X_1$ is assumed to be strong subexponential,
by Proposition \ref{cor:X.via.Pi} the distribution $F$ is strong subexponential too
and $\overline\Pi(x)\sim\P\{X_1>x\}\sim\lambda\overline F(x)$ as $x\to\infty$.

The proof is split into two parts, where we obtain matching lower and upper bounds.
We start with the lower bound.

Consider the sequence of jump epochs $T_n$ of the compound
Poisson component $X^{(3)}$, that is when $Y_n:=X_{T_n}-X_{T_n-0}\in\R\setminus(-1,1)$.
Then, as discussed in Introduction, the $\tau$ satisfies the condition \eqref{tau.n.crp}.
The distribution $F$ of $Y$ is long-tailed by Proposition \ref{cor:X.via.Pi}.

For the lower bound for $\P\{M_\tau>x\}$, let us consider a compound Poisson process
$\widehat X_t$ with jumps $X_{T_n}-X_{T_{n-1}}$ at time epochs $T_n$,
which are distributed as $X_{T_1}$. By the construction, $M_\tau\ge \widehat M_\tau$.
Application of Theorem \ref{thm:Poisson} to the compound Poisson process $\widehat X$
yields the required lower tail bound for $M_\tau$,
\begin{eqnarray}\label{lower.Levy}
\P\{M_\tau>x\}
&\ge& \P\{\widehat M_\tau>x\}
\ \sim\ \frac{1+o(1)}{m} \E\int_x^{x+m\tau}\P\{X_1>y\}dy\quad\mbox{as }x\to\infty,
\end{eqnarray}
uniformly for all $\tau$ independent of the future increments of $X$.

Now let us proceed with a matching upper bound.
Firstly, consider the random time $\widetilde\tau:=\max\{T_n:\ T_n\le\tau\}$ which
does not depend on the future increments of $X$ in the sense
of Definition \ref{defindfuture}; let $\nu$ be such that $\widetilde\tau=T_\nu$. For that reason,
\begin{eqnarray*}
M_\tau &\le& M_{\widetilde\tau}+\max_{t\in[T_\nu,T_{\nu+1})}(X_t-X_\nu)\ =\
M_{\widetilde\tau}+\eta
\end{eqnarray*}
where conditioning on $\widetilde\tau$ and Example \ref{example3.1.1} imply that
$\eta$ is an independent random variable distributed as $M_{T_1-0}$.
Since only $X^{(1)}$ and $X^{(2)}$ contribute to the value of $M_{T_1-0}$ and since
the exponentially distributed time interval $[0,T_1)$ is independent of them,
the distribution of the random variable $M_{T_1-0}$ is light-tailed;
hence the tail distribution of $c\eta$ is negligible compared to that of $F$,
so we can perform an upper bound \eqref{remainder.little.o}.
In addition, $N_\tau=N_{\widetilde\tau}$. Therefore, it suffices to show
a matching upper bound for $M_{\widetilde\tau}$, so from now on we assume
that $\tau$ only takes values from $\{T_n,\ n\ge 0\}$.

If $\tau$ is determined by the compound Poisson process $X^{(3)}$
with linear component $-mt$, then $\tau$ is independent of
both $X^{(1)}$ and $X^{(2)}$. Then we have the following upper bound
\begin{eqnarray*}
M_\tau &\le& \sup_{t\le\tau}(X^{(3)}_t+\varepsilon t)
+\sup_{t>0}(X^{(1)}_t+X^{(2)}_t-\varepsilon t),
\end{eqnarray*}
where the second supremum is independent of the first one.
Since both $X^{(1)}$ and $X^{(2)}$ are light-tailed,
the distribution of the second overall supremum is light-tailed too.
For the first supremum we apply Theorem \ref{thm:Poisson}
for a compound Poisson process with linear component,
and hence conclude the desired upper bound
which together with the lower bound \eqref{lower.Levy}
implies the required asymptotics.
\end{proof}

\section*{Acknowledgements}
The work is partially supported by the Polish National Science Centre
under the grant 2021/41/B/HS4/00599.

The authors are very thankful to the two referees whose comments
helped a lot to improve the paper.

\section*{Appendix}

\begin{proof}[Proof of Lemma \ref{l:conditioning}]
We consider the case of Definition \ref{indinc}.
Since the event $\{\tau>0\}=\overline{\{\tau=0\}}$ does not depend on the $\sigma$-algebra
generated by the increments $X_t-X_0=X_t$, the conditioning on this event does not
change the distribution of the process. Next, let $t>0$, let $A\in\sigma\{X_s,s\le t\}$,
and let $B\in\sigma\{X_{t+s}-X_t,s>0\}$. Then, by the definition of $\widehat P$,
\begin{eqnarray*}
\widehat\P\{A,\widehat\tau>t,B\} &=&
\frac{\P\{A,\tau>t,B,\tau>0\}}{\P\{\tau>0\}}
\ =\ \frac{\P\{A,\tau>t,B\}}{\P\{\tau>0\}}
\ =\ \frac{\P\{A,\tau>t\}\P\{B\}}{\P\{\tau>0\}},
\end{eqnarray*}
owing to the independence of the future increments of $\tau$.
Again by the definition of $\widehat P$,
\begin{eqnarray*}
\frac{\P\{A,\tau>t\}\P\{B\}}{\P\{\tau>0\}} &=&
\widehat\P\{A,\widehat\tau>t\}\P\{B\}\ =\ \widehat\P\{A,\widehat\tau>t\}\widehat\P\{B\},
\end{eqnarray*}
since $\widehat X$ is stochastically equivalent to $X$.
\end{proof}

\begin{proof}[Proof of Lemma \ref{l:E.tau.f}]
For any fixed $T$, the random variable $\tau\wedge T$ also does not depend
on the future increments of $X$ as $\tau$, hence
\begin{eqnarray*}
\E (X_T-X_{\tau\wedge T}) &=& \E\E \{X_T-X_{\tau\wedge T}\mid\tau\}
\ =\ m\E(T-\tau\wedge T).
\end{eqnarray*}
Therefore,
\begin{eqnarray*}
mT &=& \E X_T
\ =\ \E X_{\tau\wedge T}+\E (X_T-X_{\tau\wedge T})
\ =\  \E X_{\tau\wedge T}+mT-m\E(\tau\wedge T),
\end{eqnarray*}
which implies $\E X_{\tau\wedge T}=m\E(\tau\wedge T)$ for all $T$.
Hence, by the Lebesgue monotone convergence theorem applied to $\tau\wedge T$,
$\E X_{\tau\wedge T} \to m\E\tau$ as $T\to\infty$.
On the other hand, $\E X_{\tau\wedge T}\to\E X_\tau$
because the random variable $X_\tau$ is integrable. Indeed,
\begin{eqnarray*}
|X_\tau| &\le& \sum_{n=1}^{[\tau]+1} Y_n,
\end{eqnarray*}
where the random variables
\begin{eqnarray*}
Y_n &:=& \sup_{s\in(0,1]} |X_{n-s}-X_{n-1}|,\quad n\ge 1,
\end{eqnarray*}
are i.i.d.\ with finite mean value because $X$ is a L\'evy process with finite drift.
Since $[\tau]+1$ does not depend on the future of
the sequence $\{Y_n\}$, by the Kolmogorov--Prokhorov equality,
\begin{eqnarray*}
\E \sum_{n=1}^{[\tau]+1} Y_n &=& (\E[\tau]+1)\E Y_1\ <\ \infty,
\end{eqnarray*}
so the proof of the first statement is complete.

For the second statement, firstly notice that, for a bounded $\tau$,
that is $\tau\le T$ for some $T<\infty$,
\begin{eqnarray*}
\sigma^2T &=& \E(X_T-mT)^2\\
&=& \E(X_\tau-m\tau)^2+2\E(X_\tau-m\tau)(X_T-X_\tau-m(T-\tau))
+\E(X_T-X_\tau-m(T-\tau))^2.
\end{eqnarray*}
Conditioning on $\tau$ implies that
\begin{eqnarray*}
\E(X_\tau-m\tau)(X_T-X_\tau-m(T-\tau))
&=& \E\E\{(X_\tau-m\tau)(X_T-X_\tau-m(T-\tau)\mid\tau\}
\ =\ 0,
\end{eqnarray*}
because $X_T-X_\tau-m(T-\tau)$ is independent of $X_\tau-m\tau$ given $\tau$.
Similarly,
\begin{eqnarray*}
\E(X_T-X_\tau-m(T-\tau))^2 &=& \E\E\{(X_T-X_\tau-m(T-\tau))^2\mid\tau\}
\ =\ \sigma^2\E(T-\tau).
\end{eqnarray*}
Combining the last three equalities we conclude that
\begin{eqnarray*}
\E(X_\tau-m\tau)^2 &=& \sigma^2T-\E(X_T-X_\tau-m(T-\tau))^2
\ =\ \sigma^2\E\tau
\end{eqnarray*}
for any bounded $\tau$. For an unbounded $\tau$,
we apply it to $\tau\wedge T$ and then let $T\to\infty$
as in the proof of the first statement.
\end{proof}

\begin{proof}[Proof of Lemma \ref{l:stopping.T}]
As we know from Lemma \ref{l:E.tau.f}, $\E N_\tau=\lambda\E\tau$
and $\E (N_\tau-\lambda\tau)^2=\lambda\E\tau$. Therefore,
\begin{eqnarray*}
\E N_\tau^2 &=& \E (N_\tau-\lambda\tau)^2+2\lambda\E N_\tau \tau-\lambda^2\E\tau^2
\ \le\ \E (N_\tau-\lambda\tau)^2+2\lambda T\E N_\tau.
\end{eqnarray*}
due to $\tau\le T$. Thus,
\begin{eqnarray*}
\E N_\tau^2 &\le& \lambda\E\tau+2\lambda T\lambda\E\tau
\ =\ (2T\lambda^2+\lambda)\E\tau,
\end{eqnarray*}
which in turn yields, for all $A>0$,
\begin{eqnarray*}
\E\{N_\tau;\ N_\tau>A\} &\le& \E N_\tau^2/A
\ \le\ (2T\lambda^2+\lambda)\E\tau/A,
\end{eqnarray*}
so, owing to Lemma \ref{l:E.tau.f} again,
\begin{eqnarray}\label{N.tau.le.A}
\E\{N_\tau;\ N_\tau\le A\} &=& \E N_\tau-\E\{N_\tau;\ N_\tau>A\}
\ \ge\ (\lambda-(2T\lambda^2+\lambda)/A)\E\tau.
\end{eqnarray}
Further, on the one hand,
\begin{eqnarray}\label{int.upper.2}
\E\int_x^{x+aN_\tau}\overline F(v)dv &\le& \overline F(x)\E(aN_\tau)
\ =\ a\lambda\E\tau\overline F(x).
\end{eqnarray}
On the other hand, for any fixed $A$,
\begin{eqnarray*}
\E\int_x^{x+aN_\tau}\overline F(v)dv
&\ge& \E\Bigl(\int_x^{x+aN_\tau}\overline F(v)dv;\ N_\tau\le A\Bigr)\\
&\ge& \overline F(x+aA)\E\{aN_\tau;\ N_\tau\le A\}
\ \ge\ (a\lambda-(2Ta\lambda^2+\lambda)/A)\E\tau\overline F(x+aA),
\end{eqnarray*}
by \eqref{N.tau.le.A}. Thus,
for any fixed $\varepsilon>0$, we can choose a sufficiently large A such that
\begin{eqnarray}\label{int.lower.2}
\E\int_x^{x+aN_\tau}\overline F(v)dv
&\ge& (a\lambda-\varepsilon/2)\E\tau\overline F(x+aA)
\ \ge\ (a\lambda-\varepsilon)\E\tau\overline F(x)
\end{eqnarray}
for all sufficiently large $x$, due to the long-tailedness of the distribution $F$.
Combining the bounds \eqref{int.upper.2} and \eqref{int.lower.2},
we conclude the first uniform asymptotics stated in the lemma,
\begin{eqnarray*}
\E\int_x^{x+aN_\tau}\overline F(v)dv &\sim& a\lambda\E\tau\overline F(x)
\quad\mbox{as }x\to\infty.
\end{eqnarray*}

Taking into account that
\begin{eqnarray*}
a\lambda\E\tau\overline F(x+T) &\le&
\E\int_x^{x+a\lambda\tau}\overline F(v)dv
\ \le\ a\lambda\E\tau\overline F(x),
\end{eqnarray*}
we also conclude the second uniform equivalence of the lemma,
\begin{eqnarray*}
\E\int_x^{x+a\lambda\tau}\overline F(v)dv &\sim& a\lambda\E\tau\overline F(x)
\quad\mbox{as }x\to\infty.
\end{eqnarray*}
\end{proof}

\end{document}